\crefname{equation}{}{}
\newcommand{\IR}{{\mathbb{R}}}
\newcommand{\CA}{{\mathcal{A}}}
\newcommand{\CB}{{\mathcal{B}}}
\newcommand{\CE}{{\mathcal{E}}}
\newcommand{\bi}{{\bf i}}
\newcommand{\br}{{\bf r}}
\newcommand{\bs}{{\bf s}}
\newcommand{\by}{{\bf y}}
\newcommand{\bx}{{\bf x}}
\newcommand{\diag}{{\rm diag}}
\newcommand{\blkdiag}{{\rm blkdiag}}
\definecolor{red}{rgb}{1,0,0}
\definecolor{green}{rgb}{0,1,0}
\definecolor{blue}{rgb}{0,0,1}
\definecolor{cyan}{cmyk}{1,0,0,0}
\definecolor{magenta}{cmyk}{0,1,0,0}
\definecolor{yellow}{cmyk}{0,0,1,0}
\newcommand{\Cb}[1]{{\textcolor{blue}{#1}}}
\newcommand{\Cr}[1]{{\textcolor{red}{#1}}}
\begin{document}

\title{Some Tucker-like approximations based on the modal semi-tensor product \protect
	\thanks{The first author was supported by the National Natural Science Foundation of China (No. 11801074), key research projects of general universities in Guangdong Province (No. 2019KZDXM034), basic research and applied basic research projects in Guangdong Province (Projects of Guangdong, Hong Kong and Macao Center for Applied Mathematics) (No. 2020B1515310018), and the Shantou University Start-up Funds for Scientific Research. The second author was supported by research grants MYRG2019-00042-FST from University of Macau and 0014/2019/A from FDCT of Macao. The third author was supported by the Zhejiang Provincial Natural Science Foundation of China under Project LY21A010010 and Project LY21A010004, the Fundamental Research Funds for the Provincial Universities of Zhejiang GK199900299012-006.}}

\author[1]{Ze-Jia Xie}

\author[2]{Xiao-Qing Jin*}

\author[3]{Zhi Zhao}

\authormark{ZE-JIA XIE \textsc{et al}}

\address[1]{\orgdiv{Department of Mathematics, College of Science}, \orgname{Shantou University}, \orgaddress{\state{Shantou, 515063}, \country{China}}}

\address[2]{\orgdiv{Department of Mathematics}, \orgname{University of Macau}, \orgaddress{\state{Macao}, \country{China}}}

\address[3]{\orgdiv{Department of Mathematics, School of Sciences}, \orgname{Hangzhou Dianzi University}, \orgaddress{\state{Hangzhou, 310018}, \country{China}}}

\corres{*Xiao-Qing Jin, Department of Mathematics, University of Macau, Macao, China. \email{xqjin@umac.mo}}


\abstract[Summary]{ Approximating higher-order tensors by the Tucker format has been applied in many fields such as psychometrics, chemometrics, signal processing, pattern classification, and so on. In this paper, we propose some new Tucker-like approximations based on the modal semi-tensor product (STP), especially, a new singular value decomposition (SVD) and a new higher-order SVD (HOSVD) are derived. Algorithms for computing new decompositions are provided. We also give some numerical examples to illustrate our theoretical results.}

\keywords{ Tucker approximation, semi-tensor product, HOSVD, SVD}


\maketitle

\footnotetext{\textbf{Abbreviations:} STP, semi-tensor product; SVD, singular value decomposition; HOSVD, higher-order singular value decomposition}

\section{Introduction}\label{sec:intro}
The Tucker format approximation of higher-order tensors is a useful tensor approximation. It has been applied in areas of psychometrics \cite{KiMe01}, chemometrics \cite{He94,SmBrGe04}, computer vision \cite{VaTe02},  signal processing \cite{LaVa04,MuBo05}, data mining \cite{SaEl07}, and so on. Several algorithms have been developed to compute the Tucker approximation, including the higher-order singular value decomposition (HOSVD) \cite{LaMoVa20}, the higher-order orthogonal iterations (HOOI) \cite{KrLe80,LaMoVa20_2}, the Newton-type algorithms \cite{ElSa09,IsLaAbHu09,SaLi10}, the Riemannian trust-region method \cite{IsAbHuLa11}, and the randomized algorithms \cite{ChWe19,ChWeYa20,DrMa07}. A review of tensor decompositions including the Tucker decomposition can be found in \cite{KoBa09}.

In this paper, we consider a new Tucker-like approximation based on the modal semi-tensor product (STP) defined in Section~\ref{sec:mstp}. The STP of matrices was firstly proposed in 2001 by Cheng \cite{Ch01}. Since then it has received many applications, and become an important tool in stabilization and control design of dynamic systems, e.g., power systems \cite{MeLiXu10}, analysis and control of logical systems \cite{ChQiLi11}, finite games \cite{ChHeQi01}, etc. The STP is a generalization of conventional matrix multiplication. For the matrix multiplication $A\times B$, the column number of $A$ must be equal to the row number of $B$, which is called the ``equal dimension'' condition, while, for the STP of matrices $A$ and $B$, denoted by $A\ltimes B$, we only require that $A$ and $B$ satisfy the ``multiple dimension'' condition, i.e., the column number of $A$ is a multiple or a factor of the row number of $B$. If such multiple is equal to 1, then the STP becomes the conventional matrix multiplication.

Similar to the tensor mode-$k$ product, i.e., multiplying a tensor by a matrix in mode $k$, we can also develop a new mode-$k$ product between a tensor and a matrix based on the STP. Then a new Tucker-like approximation based on this new mode-$k$ product can be proposed. We develop new versions of SVD and HOSVD for achieving such approximation. It is well-known that the computation of the SVD for an $n$-by-$n$ matrix requires $\mathcal{O}(n^3)$ operations and at least $\mathcal{O}(n^2)$ storage. Thus, if the dimension $n$ is very large, then the computation of the SVD is often infeasible. In this case other type of approximations are considered. The SVD based on the STP could be a possible approximation to deal with this case.

\section{Notation and preliminaries}
\label{sec:pre}

\subsection{Tensor unfolding}
\label{ssec:unfolding}

The tensor unfolding, also known as the matricization \cite{GoVa13,LaMoVa20} or flattening \cite{VaTe02}, is to arrange the entries of a tensor into a matrix. The tensor unfolding plays an important role in tensor computations because it can serve as a bridge between matrix computations and tensor computations. There are different kinds of tensor unfoldings. Particularly important are the modal unfoldings.

Let $\CA \in \IR^{n_1\times \cdots \times n_d}$, $n=n_1 \cdots n_d$, and $\bi =[i_1,\dots,i_d]$ with $1\leq i_k \leq n_k$ for $1\leq k \leq d$. We define a multi-index by 
\begin{equation}\label{<>}
	<\bi>:=i_1+\sum_{\alpha=2}^{d}(i_\alpha-1)\prod_{\beta=1}^{\alpha-1}n_\beta.
\end{equation}
The mode-$k$ unfolding of $\CA$ is an $n_k$-by-$(n/n_k)$ matrix, denoted by $\CA_{(k)}$, whose columns are the mode-$k$ fibers. Precisely, the mode-$k$ unfolding $\CA_{(k)}$ is defined by 
\begin{equation}
	\CA_{(k)}(i_k, <\bi_{-k}>)=\CA(\bi),
\end{equation}
where $\bi_{-k}=[i_1,\dots,i_{k-1},i_{k+1},\dots,i_d]$ and $<\bi_{-k}>$ is defined as in $(\ref{<>})$.

\subsection{STP of matrices}
\label{ssec:STP}
The following definition for the new inner product is essential for the definition of the STP of matrices.

\begin{definition}[\cite{Ch01}]
	Let $\bx=[x_1,\dots,x_s]^\top\in \IR^s$ and $\by=[y_1,\dots,y_t]^\top \in \IR^t$. 
	
	Case 1: If $t$ is a factor of $s$, i.e., if $s=t\cdot n$ for some positive integer $n$, then we split $\bx^\top$ into $t$ equal blocks, named $\bx_1^\top,\dots,\bx_t^\top$. Each block is an $n$-dimensional row vector. The (left) STP of $\bx^\top$ and $\by$ is an $n$-dimensional row vector defined as 
	\begin{equation}
		\bx^\top \ltimes \by := \sum_{k=1}^{t}y_k\bx_k^\top \in \IR^{1\times n}.
	\end{equation}
	
	Case 2: If $s$ is a factor of $t$, i.e., if $t=s\cdot n$ for some positive integer $n$, then we split $\by$ into $s$ equal blocks, named $\by_1,\dots,\by_s$. Each block is an $n$-dimensional column vector. The (left) STP of $\bx^\top$ and $\by$ is an $n$-dimensional column vector defined as 
	\begin{equation}
		\bx^\top \ltimes \by := \sum_{k=1}^{s}x_k\by_k \in \IR^{n}.
	\end{equation}
	\label{def:stp0}
\end{definition}
In Definition~\ref{def:stp0}, if $t=s$, the inner product based on STP becomes the conventional inner product $\bx^\top \by$. Examples for Cases 1 and 2 are showed as follows, respectively:

\begin{align*}
	\begin{bmatrix}\Cb{x_1}, \Cb{x_2}, \Cr{x_3}, \Cr{x_4}, x_5, x_6\end{bmatrix} \ltimes 
	\begin{bmatrix}
		\Cb{y_1} \\ \Cr{y_2} \\ y_3
	\end{bmatrix} & =
	\Cb{y_1} \left[\Cb{x_1}, \Cb{x_2}\right]  + \Cr{y_2} \left[\Cr{x_3}, \Cr{x_4}\right] + y_3 \left[x_5, x_6\right] \\ 
	& = \begin{bmatrix}\Cb{y_1x_1}+\Cr{y_2x_3}+y_3x_5, \Cb{y_1x_2}+\Cr{y_2x_4}+y_3x_6\end{bmatrix}
\end{align*}
and
\begin{align*}
	\begin{bmatrix}
		\Cb{x_1}, \Cr{x_2}, x_3
	\end{bmatrix}  \ltimes 
	\begin{bmatrix}\Cb{y_1} \\ \Cb{y_2}\\ \Cr{y_3}\\ \Cr{y_4}\\ y_5\\ y_6\end{bmatrix}	& =
	\Cb{x_1}	\begin{bmatrix}\Cb{y_1} \\ \Cb{y_2}\end{bmatrix} + 
	\Cr{x_2} \begin{bmatrix}\Cr{y_3} \\ \Cr{y_4}\end{bmatrix} + 
	x_3 \begin{bmatrix} y_5 \\y_6\end{bmatrix} \\ 
	& = \begin{bmatrix}\Cb{x_1y_1}+\Cr{x_2y_3}+x_3y_5 \\ \Cb{x_1y_2}+\Cr{x_2y_4}+x_3y_6\end{bmatrix}.
\end{align*}

\begin{definition}[\cite{ChZh03}]
	Let $M\in \IR^{m\times n}$ and $N\in \IR^{p\times q}$. If $n$ is a factor of $p$ or $p$ is a factor of $n$, then $C=M \ltimes N$ is called the (left) STP of $M$ and $N$, where $C$ consists of $m\times q$ blocks as $C=[C^{ij}]$ and
	\begin{equation}
		C^{ij}=M(i,:) \ltimes N(:,j), \ \ i=1,\dots,m,\  j=1,\dots,q.
	\end{equation}
	\label{def:stp}
\end{definition}

In Definition \ref{def:stp}, if $n=p$, the left STP becomes the conventional matrix product $MN$. Cheng and Zhang \cite{ChZh03} also proposed the right STP for matrices. However, since the left STP has better properties than the right one, we only use the left STP in this paper, and STP is defaulted to be the left STP.

\begin{remark}
	Let $A\in \IR^{m\times n}$ and $\bx\in \IR^p=\IR^{p\times 1}$. If $n$ is a factor of $p$, say $p=t\cdot n$ for some positive integer $t$, then $A\ltimes \bx\in \IR^{tm}$ which is still a column vector. If $p$ is a factor of $n$, say $n=s\cdot p$ for some positive integer $s$, then $A\ltimes \bx\in \IR^{m\times s}$ which is an $m$-by-$s$ matrix.

	For a higher dimension vector $\bx\in \IR^p$ with  $p=t\cdot n$ for some positive integer $t$, we want to use the STP to reduce its dimension, i.e., the dimension of $A\ltimes \bx\in \IR^{mt}$ is much smaller than the dimension of $\bx\in \IR^p$, and we hope that the product $A\ltimes \bx$ is still a column vector rather than a matrix. Based on this consideration, we focus on the case that $n$ is a factor of $p$ in Definition~\ref{def:stp}.
\end{remark}

Let ``$\otimes$'' denote the Kronecker product of matrices \cite{GoVa13} and $I_n$ denote the $n$-by-$n$ identity matrix. The following proposition shows some properties of the STP.

\begin{proposition}[\cite{ChZh03}]
	(i) If $A\in \IR^{m\times np}$ and $B\in \IR^{p\times q}$, then 
	\begin{equation}
		A\ltimes B=A(B\otimes I_n) \in \IR^{m\times nq}.
	\end{equation}
	
	(ii) If $A\in \IR^{m\times n}$ and $B\in \IR^{np\times q}$, then 
	\begin{equation}
		A\ltimes B=(A\otimes I_p)B \in \IR^{mp\times q}.
	\end{equation}
	
	(iii) Suppose that the matrices $A, B, C$ have proper dimensions such that $\ltimes$ is well defined. Then
	\begin{equation}
		(A\ltimes B)\ltimes C=A\ltimes (B\ltimes C). 
	\end{equation}
	\label{pro:iii}
\end{proposition}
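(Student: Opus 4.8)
The plan is to treat parts (i) and (ii) by a direct block computation and then deduce the associativity (iii) from them together with standard Kronecker-product identities. Throughout I write $A\in\IR^{m_1\times n_1}$, $B\in\IR^{m_2\times n_2}$, $C\in\IR^{m_3\times n_3}$.

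For (i) I would start from the block definition $C^{ij}=A(i,:)\ltimes B(:,j)$ in Definition~\ref{def:stp}. Since the row $A(i,:)$ has length $np$ and the column $B(:,j)$ has length $p$, each such product falls under Case~1 of Definition~\ref{def:stp0}: splitting $A(i,:)$ into $p$ consecutive blocks of length $n$ gives $A(i,:)\ltimes B(:,j)=\sum_{k=1}^{p}B(k,j)\,(\text{$k$-th length-$n$ block of }A(i,:))$. On the other hand, the $j$-th block-column of $B\otimes I_n$ stacks the blocks $B(k,j)I_n$, so multiplying by the correspondingly partitioned $A$ reproduces exactly the same scalar-weighted block sum as the $(i,j)$ block of $A(B\otimes I_n)$. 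Part (ii) is the mirror image: now $A(i,:)$ is shorter than $B(:,j)$, so Case~2 applies, $B(:,j)$ is split into $n$ blocks of length $p$, and matching this against the $i$-th block-row $[A(i,1)I_p,\dots,A(i,n)I_p]$ of $A\otimes I_p$ yields $A\ltimes B=(A\otimes I_p)B$. Both identities thus reduce to reading off one scalar-weighted block sum in two ways.

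For (iii) the key is the unified reformulation that (i) and (ii) furnish: if $A$ has $a$ columns, $B$ has $b$ rows, and $\ell=\mathrm{lcm}(a,b)$, then
\begin{equation*}
 A\ltimes B=(A\otimes I_{\ell/a})(B\otimes I_{\ell/b}),
\end{equation*}
the right-hand side being an ordinary product of two conformable matrices (when $b\mid a$ this is (i), when $a\mid b$ it is (ii), and when $a=b$ it is $AB$). Substituting this formula into each $\ltimes$, I would expand both $(A\ltimes B)\ltimes C$ and $A\ltimes(B\ltimes C)$ into ordinary products of the inflated matrices $A\otimes I_{\ast}$, $B\otimes I_{\ast}$, $C\otimes I_{\ast}$. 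The algebraic engine is then the mixed-product rule $(X\otimes I_s)(Y\otimes I_s)=(XY)\otimes I_s$, the nesting rule $(X\otimes I_s)\otimes I_t=X\otimes I_{st}$, and $I_s\otimes I_t=I_{st}$; applying these collapses each bracketing to a single ordinary product of the form $(A\otimes I_{\alpha})(B\otimes I_{\beta})(C\otimes I_{\gamma})$, and associativity of ordinary matrix multiplication then finishes the argument.

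The hard part will be the number-theoretic bookkeeping needed to confirm that the two bracketings inflate $A$, $B$, $C$ by the \emph{same} identity factors. Concretely, the exponents $\alpha,\beta,\gamma$ produced by the left bracketing pass through $\mathrm{lcm}(n_1,m_2)$ and then an lcm against $m_3$, whereas those from the right bracketing pass through $\mathrm{lcm}(n_2,m_3)$ and then an lcm against $n_1$; these must be shown to coincide. I expect to organize this as a case analysis on the two-by-two possibilities for the directions of divisibility in the pairs $(A,B)$ and $(B,C)$. In the aligned cases the exponents visibly telescope — for instance, when $n_1\mid m_2$ and $n_2\mid m_3$ with $r=m_2/n_1$ and $s=m_3/n_2$, both sides reduce to $(A\otimes I_{rs})(B\otimes I_s)C$ — while the mixed cases require checking a short gcd/lcm identity among $n_1,m_2,n_2,m_3$ and, along the way, verifying that the inflated factors remain conformable. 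It is this case-by-case tracking of dimensions, rather than any conceptual difficulty, where the real effort lies.
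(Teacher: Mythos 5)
First, a point of reference: the paper does not prove this proposition at all --- it is imported verbatim from the STP literature (\cite{ChZh03}), so there is no in-paper argument to compare yours against, and your attempt has to stand on its own. Parts (i) and (ii) of your attempt do stand. Matching each block $C^{ij}=A(i,:)\ltimes B(:,j)$ of Definition~\ref{def:stp} (via Case 1, resp.\ Case 2, of Definition~\ref{def:stp0}) against the corresponding block of $A(B\otimes I_n)$, resp.\ $(A\otimes I_p)B$, is exactly the right computation, and your block bookkeeping is correct.

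Part (iii), however, is a plan rather than a proof: you reduce associativity to showing that both bracketings collapse to $(A\otimes I_{\alpha})(B\otimes I_{\beta})(C\otimes I_{\gamma})$ with the \emph{same} exponents, verify one aligned case, and then defer the rest --- which you yourself flag as ``where the real effort lies'' --- to an unchecked ``gcd/lcm identity.'' That deferred verification is the gap. It is fillable, and more cheaply than you anticipate: under the paper's multiple-dimension convention no lcm arithmetic is needed, only a four-way case split on the divisibility directions in the pairs $(n_1,m_2)$ and $(n_2,m_3)$, where $A\in\IR^{m_1\times n_1}$, $B\in\IR^{m_2\times n_2}$, $C\in\IR^{m_3\times n_3}$. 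If $m_2\mid n_1$ ($n_1=m_2n$) and $m_3\mid n_2$ ($n_2=m_3n'$), both bracketings telescope to $A(B\otimes I_n)(C\otimes I_{nn'})$; if $n_1\mid m_2$ ($m_2=n_1p$) and $n_2\mid m_3$ ($m_3=n_2p'$), both give $(A\otimes I_{pp'})(B\otimes I_{p'})C$; if $n_1\mid m_2$ and $m_3\mid n_2$, both give $(A\otimes I_p)B(C\otimes I_{n'})$ with no interaction between the two inflations. The only delicate pattern is $m_2\mid n_1$ together with $n_2\mid m_3$: there the \emph{outer} products are not automatically defined, and the proposition's hypothesis that they are defined is precisely what forces $p'\mid n$ or $n\mid p'$ (with $n=n_1/m_2$, $p'=m_3/n_2$); writing $n=p't$ both sides become $A(B\otimes I_n)(C\otimes I_t)$, and writing $p'=nu$ both sides become $(A\otimes I_u)(B\otimes I_{p'})C$. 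Each identification uses only the two Kronecker rules you list plus ordinary associativity, so your skeleton is sound --- but until these cases are actually written out, (iii) is asserted, not proved, and the lcm formulation you lean on imports more machinery than the multiple-dimension setting requires.
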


\section{The modal STP}
\label{sec:mstp}

Firstly, we recall the conventional tensor modal product \cite{GoVa13}. Suppose that $\CA\in \IR^{n_1 \times \cdots \times n_d}$ and $U\in \IR^{m\times n_k}$ for $1\leq k\leq d$. Then the tensor $\CB \in \IR^{n_1\times \cdots n_{k-1}\times m\times n_{k+1}\times \cdots \times n_d}$ is the mode-$k$ product of $\CA$ and $U$ if
\begin{equation}
	\CB_{(k)}=U\cdot \CA_{(k)}
\end{equation}
or elementwise
\begin{equation*}
	\CB(i_1,\dots,i_{k-1},j,i_{k+1},\dots,i_d)=\sum_{i_k=1}^{n_k}U(j,i_k)\CA(i_1,\dots,i_{k-1},i_k,i_{k+1},\dots,i_d).
\end{equation*}
This operation is denoted by 
$$\CB=\CA \times_k U.$$

Next, we propose a new modal product based on the STP introduced in Definition~\ref{def:stp}.

\begin{definition}\label{def:mstp}
	Suppose that $\CA\in \IR^{n_1 \times \cdots \times n_d}$ and $U\in \IR^{m\times (n_k/s_k)}$ where $s_k$ is a factor of $n_k$ for $1\leq k\leq d$. Then $\CB \in \IR^{n_1\times \cdots n_{k-1}\times s_k m\times n_{k+1}\times \cdots \times n_d}$ is the mode-$k$ STP of $\CA$ and $U$ if
	\begin{equation}
		\CB_{(k)}=U\ltimes \CA_{(k)}.
	\end{equation}
	The result, denoted by 
	$$\CB=\CA\ltimes_k U,$$
	consists of $n_1\times \cdots \times n_{k-1}\times m\times n_{k+1}\times \cdots \times n_d$ blocks as $\CB=[\CB^{i_1\dots i_{k-1}ji_{k+1}\dots i_d}]$ with each block
	\begin{equation}
		\CB^{i_1\dots i_{k-1}ji_{k+1}\dots i_d}=U(j,:)\ltimes \CA(i_1,\dots,i_{k-1},:,i_{k+1},\dots,i_d)\in \IR^{s_k}
	\end{equation}
	for $1\leq i_\alpha\leq n_\alpha$, $1\leq \alpha\leq d$, $\alpha\neq k$, and $1\leq j\leq m$.
\end{definition}
By Proposition~\ref{pro:iii} (i), it is obvious that 
\begin{equation}\label{eq:timesI}
	\CB=\CA\ltimes_k U=\CA \times_k (U\otimes I_{s_k}).
\end{equation}

Similar to the properties of modal product (see Properties 2 and 3 in \cite{LaMoVa20}), the modal STP also has the following properties.
\begin{lemma}\label{le:stp-property}
	Suppose that $\CA\in \IR^{n_1 \times \cdots \times n_d}$ and matrices $U, V, W$ have proper dimensions such that the modal STPs $\ltimes_s, \ltimes_t$, and $\ltimes$ are well defined. Then 
	\begin{enumerate}[(i)]
		\item \ \  $ (\CA \ltimes_s U)\ltimes_t V = (\CA \ltimes_t V) \ltimes_s U$\ \ for $1\leq s \neq t \leq d$;
		\item \ \  $(\CA \ltimes_t V)\ltimes_t W = \CA \ltimes_t (W\ltimes V)$.
	\end{enumerate}
\end{lemma}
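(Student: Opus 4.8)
The plan is to reduce both identities to known facts about the \emph{conventional} modal product by means of the conversion formula \eqref{eq:timesI}, which rewrites every modal STP as an ordinary mode-$k$ product against a Kronecker-inflated matrix, $\CA \ltimes_k U = \CA \times_k (U \otimes I_{s_k})$. Throughout I would denote by $a$ and $b$ the inflation factors attached to the two STPs in question (so that, for instance, $\CA \ltimes_s U = \CA \times_s (U \otimes I_a)$ and $\CA \ltimes_t V = \CA \times_t (V \otimes I_b)$), and I would freely use the two standard properties of the ordinary modal product quoted from \cite{LaMoVa20}: commutativity across distinct modes, $(\CA \times_s P)\times_t Q = (\CA \times_t Q)\times_s P$ for $s \neq t$, and composition within a single mode, $(\CA \times_t P)\times_t Q = \CA \times_t (QP)$.

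For part (i) this reduction is essentially immediate. Applying \eqref{eq:timesI} to each factor turns the left-hand side into $\bigl(\CA \times_s (U \otimes I_a)\bigr)\times_t (V \otimes I_b)$; since $s \neq t$ the two inflated matrices act on different modes, so the commutativity property of the ordinary modal product swaps the two operations, and a second application of \eqref{eq:timesI} folds the result back into $(\CA \ltimes_t V)\ltimes_s U$. No genuine computation is needed beyond invoking the conversion formula twice.

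Part (ii) is where the real content lies, because both operations act on the same mode $t$ and the STP matrices $W$ and $V$ interact directly. Using \eqref{eq:timesI} and the single-mode composition property, the left-hand side becomes $\CA \times_t \bigl[(W \otimes I_{b'})(V \otimes I_b)\bigr]$, where $b$ and $b'$ are the inflation factors of the first and second mode-$t$ STP respectively; the right-hand side, after identifying the correct inflation factor $r$ of the composite matrix $W \ltimes V$, becomes $\CA \times_t \bigl[(W \ltimes V)\otimes I_r\bigr]$. It then suffices to prove the purely matrix-level identity
\begin{equation*}
	(W \otimes I_{b'})(V \otimes I_b) = (W \ltimes V)\otimes I_r .
\end{equation*}
To establish this I would first use Proposition~\ref{pro:iii} to write $W \ltimes V$ as an ordinary product of $W$ with a Kronecker-inflated copy of $V$, and then expand both sides with the mixed-product rule $(X \otimes Y)(Z \otimes S) = (XZ)\otimes(YS)$ for the Kronecker product; the two sides then collapse to the same expression once the inflation factors are matched.

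The main obstacle is precisely this last bookkeeping of factors. The STP $W \ltimes V$ falls into one of two cases, according to whether the column count of $W$ is a multiple of, or a factor of, the row count of $V$, and in each case one must pin down the correct inflation factor $r$, select the correct form of $W \ltimes V$ supplied by Proposition~\ref{pro:iii}, and verify that the various factors combine so that the nested identity blocks $I_{b'}\otimes I_c$ collapse to $I_b$. The algebra is routine once the mixed-product property is in hand, but this dimensional matching is the delicate step that must be checked carefully in each case.
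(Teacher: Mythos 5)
Your proposal is correct and follows essentially the same route as the paper: both convert each modal STP to an ordinary modal product via \cref{eq:timesI}, dispatch (i) with the distinct-mode commutativity property of \cite{LaMoVa20}, and reduce (ii) via the single-mode composition property to the matrix identity $(W\otimes I_{b'})(V\otimes I_b)=(W\ltimes V)\otimes I_r$, which is then settled by Proposition~\ref{pro:iii} together with the Kronecker mixed-product rule. The only difference is that you explicitly flag both divisibility cases for $W\ltimes V$, whereas the paper treats just the case corresponding to its assumption $I_3=I_3'\otimes I_2$ (inflation factor of $W$ a multiple of that of $V$); your extra bookkeeping is sound and, if anything, slightly more complete.
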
  

\begin{proof}
Suppose that the matrices $U, V, W$ and the identity matrices $I_1, I_2, I_3$ have proper dimensions. Let $I_3=I_3^\prime \otimes I_2$.

(i) We have from \cref{eq:timesI} and the Property 2 in \cite{LaMoVa20} that 
\begin{align*}
	(\CA \ltimes_s U) \ltimes_t V &= (\CA \times_s (U\otimes I_1)) \times_t (V \otimes I_2)\\
	& =(\CA \times_t(V\otimes I_2))\times_s (U\otimes I_1) \\
	& =(\CA \ltimes_t V) \ltimes_s U,
\end{align*}
which proves the first equality.

(ii) Similarly, it follows from \cref{eq:timesI} and the Property 3 in \cite{LaMoVa20} that 
\begin{align*}
	(\CA \ltimes_t V) \ltimes_t W & = (\CA \times_t (V\otimes I_2))\times_t (W\otimes I_3)\\
	& = \CA \times_t [(W\otimes I_3)(V\otimes I_2)]\\
	& = \CA \times_t [(W\otimes I_3^\prime \otimes I_2)(V\otimes I_2)]\\
	& = \CA \times_t [(W\otimes I_3^\prime)V\otimes I_2]\\
	& = \CA \ltimes_t [(W\otimes I_3^\prime)V]\\
	& = \CA \ltimes_t (W\ltimes V),
\end{align*}
which shows that the second equality is true.  
\end{proof}

\begin{proposition}\label{pro:I}
	Let $\CA\in \IR^{n_1 \times \cdots \times n_d}$ and $s_k$ be a factor of $n_k$ for $1\leq k\leq d$. The matrix $I_{n_k/s_k}$ denotes the identity matrix with dimension $n_k/s_k$. Then 
	\begin{equation}
		\CA \ltimes_k I_{n_k/s_k} = \CA.
	\end{equation}
\end{proposition}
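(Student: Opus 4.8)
The plan is to reduce the statement to the conventional modal product by means of the identity \eqref{eq:timesI}, after which the claim collapses to a one-line Kronecker-product computation. First I would specialize \eqref{eq:timesI} to the case $U = I_{n_k/s_k}$. This is an admissible choice: since $I_{n_k/s_k}\in\IR^{(n_k/s_k)\times(n_k/s_k)}$, it has exactly the shape required of the argument $U$ in Definition~\ref{def:mstp} (here $m=n_k/s_k$), and consequently the $k$-th mode of the output has size $s_k m = s_k\cdot(n_k/s_k)=n_k$, so that $\CA\ltimes_k I_{n_k/s_k}$ and $\CA$ live in the same space. Applying \eqref{eq:timesI} then gives
$$\CA \ltimes_k I_{n_k/s_k} = \CA \times_k \bigl(I_{n_k/s_k} \otimes I_{s_k}\bigr).$$

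The crux of the argument is the elementary but decisive observation that the Kronecker product of two identity matrices is again an identity matrix, namely $I_{n_k/s_k}\otimes I_{s_k} = I_{(n_k/s_k)\cdot s_k} = I_{n_k}$. Substituting this into the previous display yields $\CA\ltimes_k I_{n_k/s_k} = \CA\times_k I_{n_k}$. To finish, I would invoke the definition of the conventional modal product directly on the mode-$k$ unfolding: because $\CA_{(k)}\in\IR^{n_k\times(n/n_k)}$, we have $(\CA\times_k I_{n_k})_{(k)} = I_{n_k}\,\CA_{(k)} = \CA_{(k)}$, and since a tensor is completely determined by any one of its modal unfoldings, it follows that $\CA\times_k I_{n_k}=\CA$. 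Chaining the three equalities delivers the desired identity.

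As for the main obstacle, there is no substantive difficulty once \eqref{eq:timesI} is available; the only point requiring care is the dimension bookkeeping that certifies $U=I_{n_k/s_k}$ as a legitimate argument and guarantees the Kronecker identity is applied with the correct factor sizes. I note in passing that one could instead argue directly from the block formula in Definition~\ref{def:mstp}, checking block-by-block that each $\CB^{i_1\dots i_{k-1}j i_{k+1}\dots i_d}$ reproduces the corresponding mode-$k$ fiber of $\CA$; this sidesteps \eqref{eq:timesI} but is notationally heavier, so I would favor the Kronecker-product route above.
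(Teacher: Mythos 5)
Your proof is correct and follows essentially the same route as the paper: both reduce the claim to the Kronecker identity $I_{n_k/s_k}\otimes I_{s_k}=I_{n_k}$ acting on the mode-$k$ unfolding, the only cosmetic difference being that you invoke \cref{eq:timesI} while the paper re-derives that same identity inline from Definition~\ref{def:mstp} and Proposition~\ref{pro:iii}. Your dimension bookkeeping is accurate, so nothing further is needed.
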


\begin{proof}
It follows from Definition \ref{def:mstp} and Proposition \ref{pro:iii} (ii) that
\begin{equation*}
	(\CA\ltimes_k I_{n_k/s_k})_{(k)} = I_{n_k/s_k}\ltimes \CA_{(k)} = (I_{n_k/s_k}\otimes I_{s_k})\CA_{(k)}=I_{n_k}\CA_{(k)}=\CA_{(k)}.
\end{equation*}
Therefore, $\CA\ltimes_k I_{(n_k/s_k)}= \CA$. 
\end{proof}

\begin{proposition}\label{pro:T}
	Suppose that tensors $\CA\in \IR^{n_1 \times \cdots \times n_d}$, $\CB\in \IR^{m_1 \times \cdots \times m_d}$, and $U^{(k)}\in \IR^{(n_k/s_k)\times (m_k/s_k)}$ satisfying $U^{(k)\top}U^{(k)}=I_{m_k/s_k}$ for $1\leq k\leq d$. If 
	$$\CA=\CB \ltimes_1 U^{(1)}\ltimes_2 U^{(2)}\ltimes_3 \cdots \ltimes_d U^{(d)},$$
	then 
	\begin{equation}\label{eq:ortho}
		\CB = \CA\ltimes_1 U^{(1)\top}\ltimes_2 U^{(2)\top}\ltimes_3 \cdots \ltimes_d U^{(d)\top}.
	\end{equation}
\end{proposition}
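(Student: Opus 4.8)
The plan is to substitute the given factorization of $\CA$ into the right-hand side of \cref{eq:ortho} and simplify using the three tools already in hand: the commutativity of modal STPs in distinct modes (Lemma \ref{le:stp-property}(i)), the single-mode composition rule (Lemma \ref{le:stp-property}(ii)), and the identity-factor rule (Proposition \ref{pro:I}). Conceptually this is just the tensor version of the fact that a matrix with orthonormal columns is inverted on the left by its transpose.

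First I would replace $\CA$ by $\CB \ltimes_1 U^{(1)} \ltimes_2 \cdots \ltimes_d U^{(d)}$ in the right-hand side of \cref{eq:ortho}, obtaining the chain
$$\CB \ltimes_1 U^{(1)} \ltimes_2 \cdots \ltimes_d U^{(d)} \ltimes_1 U^{(1)\top} \ltimes_2 \cdots \ltimes_d U^{(d)\top}.$$
The operations act left to right, so each $U^{(k)}$ precedes its transpose $U^{(k)\top}$. I would then apply Lemma \ref{le:stp-property}(i) repeatedly to move each $\ltimes_k U^{(k)\top}$ leftward past all the different-mode operations until it sits immediately after the matching $\ltimes_k U^{(k)}$. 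Since this commuting only ever swaps operations in distinct modes, the relative order of the unique same-mode pair $\bigl(\ltimes_k U^{(k)},\,\ltimes_k U^{(k)\top}\bigr)$ is preserved, so the rearrangement is legitimate and produces the grouped chain
$$\CB \ltimes_1 U^{(1)} \ltimes_1 U^{(1)\top} \ltimes_2 U^{(2)} \ltimes_2 U^{(2)\top} \cdots \ltimes_d U^{(d)} \ltimes_d U^{(d)\top}.$$

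Next I would collapse each adjacent same-mode pair. Applying Lemma \ref{le:stp-property}(ii) in mode $k$ with $V=U^{(k)}$ and $W=U^{(k)\top}$ rewrites $\bigl(\cdots \ltimes_k U^{(k)}\bigr)\ltimes_k U^{(k)\top}$ as $\cdots \ltimes_k \bigl(U^{(k)\top}\ltimes U^{(k)}\bigr)$. The crucial observation is that $U^{(k)\top}$ has $n_k/s_k$ columns and $U^{(k)}$ has $n_k/s_k$ rows, so the equal-dimension case applies and the STP reduces to an ordinary matrix product; by the hypothesis $U^{(k)\top}U^{(k)}=I_{m_k/s_k}$ we get $U^{(k)\top}\ltimes U^{(k)}=I_{m_k/s_k}$. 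Proposition \ref{pro:I}, applied with mode-$k$ dimension $m_k$ and factor $s_k$, then gives $\cdots \ltimes_k I_{m_k/s_k}=\cdots$, removing the pair. Carrying this out for $k=1,\dots,d$ leaves exactly $\CB$, which is \cref{eq:ortho}.

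The routine parts are the dimension checks, namely confirming that each modal STP is well defined and that the mode-$k$ size returns to $m_k$ once the pair has been applied. The one genuine subtlety I expect lies in the ordering built into Lemma \ref{le:stp-property}(ii): because that rule reverses the factors as $W\ltimes V$, it is essential that the transpose be applied \emph{after} the original factor, so that the collapse yields $U^{(k)\top}\ltimes U^{(k)}$, the orthonormal-columns relation, rather than $U^{(k)}\ltimes U^{(k)\top}$, which need not equal an identity.
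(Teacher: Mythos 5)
Your proof is correct and takes essentially the same route as the paper's: both rest on collapsing a same-mode pair via Lemma \ref{le:stp-property}(ii) into $U^{(k)\top}U^{(k)}=I_{m_k/s_k}$, deleting the identity with Proposition \ref{pro:I}, and using Lemma \ref{le:stp-property}(i) to handle the interleaving of modes (the paper merely packages this as a one-mode case first and then commutes, whereas you commute first and then collapse mode by mode). Your closing remark about the factor reversal $W\ltimes V$ in Lemma \ref{le:stp-property}(ii) is precisely the point the paper's one-mode computation silently relies on.
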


\begin{proof}
We first prove that if $\CA = \CB \ltimes_k U^{(k)}$, and $U^{(k)\top}U^{(k)}=I_{m_k/s_k}$, then $\CB = \CA \ltimes_k U^{(k)\top}$.

We have from Proposition \ref{pro:I} and Lemma \ref{le:stp-property} (ii) that
$$\CA \ltimes_k U^{(k)\top}=(\CB \ltimes_k U^{(k)}) \ltimes_k U^{(k)\top}=\CB\ltimes_k (U^{(k)\top} U^{(k)})=\CB \ltimes_k I_{m_k/s_k}=\CB.$$
Hence, by Lemma \ref{le:stp-property} (i), (\ref{eq:ortho}) can be obtained.          
\end{proof}

The following proposition can be obtained by Definition \ref{def:mstp} and the fact (\ref{eq:timesI}) combining Proposition 3.7 in \cite{Ko06} for the conventional tensor modal product.
\begin{proposition}\label{pro:unfold}
	Given a tensor $\CB\in \IR^{m_1 \times \cdots \times m_d}$ and a series of matrices $U^{(k)}\in \IR^{(n_k/s_k)\times (m_k/s_k)}$ for $1\leq k\leq d$, suppose that 
	$$\CA=\CB \ltimes_1 U^{(1)}\ltimes_2 U^{(2)}\ltimes_3 \cdots \ltimes_d U^{(d)} \in \IR^{n_1 \times \cdots \times n_d},$$
	then 
	\begin{equation}\label{eq:unfolding}
		\CA_{(k)}=\widehat{U^{(k)}}\, \CB_{(k)}\, (\widehat{U^{(d)}}\otimes \cdots \otimes \widehat{U^{(k+1)}}\otimes \widehat{U^{(k-1)}} \otimes \cdots \otimes \widehat{U^{(1)}})^\top
	\end{equation}
	and
	\begin{equation}\label{eq:vec}
		{\rm vec}(\CA)= (\widehat{U^{(d)}}\otimes \widehat{U^{(d-1)}}  \otimes \cdots \otimes \widehat{U^{(1)}}){\rm vec}(\CB),
	\end{equation}
\end{proposition}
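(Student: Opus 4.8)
The plan is to reduce the entire statement to the known unfolding and vectorization identities for the \emph{conventional} tensor modal product, as recorded in Proposition 3.7 of \cite{Ko06}. The bridge is the abbreviation $\widehat{U^{(k)}}:=U^{(k)}\otimes I_{s_k}\in\IR^{n_k\times m_k}$, which is precisely the matrix that appears in \cref{eq:timesI}.

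First I would rewrite the chain of modal STPs defining $\CA$ as a chain of conventional modal products. The identity \cref{eq:timesI}, namely $\CX\ltimes_k U=\CX\times_k(U\otimes I_{s_k})$, holds for any tensor $\CX$ of compatible size. Applying it to the first factor gives $\CB\ltimes_1 U^{(1)}=\CB\times_1\widehat{U^{(1)}}$. Since each modal STP acts in a distinct mode, I can peel the products off one at a time: after converting the first $j$ of them, the resulting tensor still has mode-$(j{+}1)$ dimension $m_{j+1}$, so the hypothesis $s_{j+1}\mid m_{j+1}$ remains in force and \cref{eq:timesI} applies again in mode $j+1$. Iterating through all $d$ modes yields
\[
\CA=\CB\times_1\widehat{U^{(1)}}\times_2\widehat{U^{(2)}}\times_3\cdots\times_d\widehat{U^{(d)}}.
\]

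Next I would invoke the standard Tucker-operator formulas. Writing $M^{(k)}:=\widehat{U^{(k)}}$, Proposition 3.7 of \cite{Ko06} gives
\[
\CA_{(k)}=M^{(k)}\,\CB_{(k)}\,\bigl(M^{(d)}\otimes\cdots\otimes M^{(k+1)}\otimes M^{(k-1)}\otimes\cdots\otimes M^{(1)}\bigr)^\top
\]
together with ${\rm vec}(\CA)=(M^{(d)}\otimes\cdots\otimes M^{(1)})\,{\rm vec}(\CB)$. Substituting back $M^{(k)}=\widehat{U^{(k)}}$ reproduces \cref{eq:unfolding,eq:vec} verbatim.

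Since this is essentially a substitution, there is no deep obstacle. The only point needing care is the \emph{iterated} use of \cref{eq:timesI} in the first step: one must confirm that replacing the mode-$j$ STP by a conventional mode-$j$ product leaves all other mode sizes unchanged, so that the divisibility conditions $s_k\mid m_k$ survive and \cref{eq:timesI} is legitimately applicable at every stage. Once the chain is in conventional form, the result is immediate from the cited proposition.
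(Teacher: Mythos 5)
Your proposal is correct and takes essentially the same route as the paper, which does not write out a detailed proof but states in one sentence that the result follows from Definition~\ref{def:mstp}, the identity \cref{eq:timesI}, and Proposition 3.7 of \cite{Ko06} --- precisely the reduction you carry out. Your explicit check that the divisibility conditions $s_k\mid m_k$ survive the mode-by-mode conversion to conventional modal products is a detail the paper leaves implicit, but it does not change the approach.
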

where $\widehat{U^{(k)}}:=U^{(k)} \otimes I_{s_k}$ for $1\leq k\leq d$.

\section{New Tucker-like approximations}
\label{sec:tucker}
\subsection{The second-order case: SVD}
First, we consider the second-order case of Tucker-like approximations. It is well-known that the best rank-$r$ approximation of a given matrix $A\in \IR^{m\times n}$ can be obtained by a truncated SVD of $A$, i.e., 
\begin{equation}
	A = U_r \Sigma_r V_r^\top+E,
\end{equation}
where $E \in \IR^{m\times n}$ is the approximation error, $\Sigma_r={\rm diag} (\sigma_1, \dots, \sigma_r)\in \IR^{r \times r}$ is a diagonal matrix containing the $r$ largest singular values $\sigma_1 \geq \cdots \geq \sigma_r$ of $A$, and $U_r \in \IR^{m\times r}$, $V_r \in \IR^{n\times r}$ are matrices whose columns are the leading $r$ left and right singular vectors of $A$, respectively. In other words, a rank-$r$ approximation of $A$ that minimizes $\|A-B\|_F$ is given by
\begin{equation*}
	B=U_r \Sigma_r V_r^\top=\Sigma_r \times_1 U_r \times_2 V_r.
\end{equation*}
This is a special case of the Tucker approximation. Next, an SVD-like approximation of a given matrix based on the STP is considered. We call it the SVD-STP. Let 
\begin{equation}\label{PA}
	A=
	\begin{bmatrix}
		A_{1,1} & \cdots & A_{1,n_1}\\
		\vdots & \ddots & \vdots \\
		A_{m_1,1} & \cdots & A_{m_1,n_1}\\
	\end{bmatrix}\in \IR^{m_1m_2 \times n_1n_2},
\end{equation}
where each $A_{i,j}\in \IR^{m_2\times n_2}$. Then
$\widetilde{A}\in \IR^{m_1n_1\times m_2n_2}$ is defined by 
\begin{equation}\label{eq:RA}
	\widetilde{A}=
	\begin{bmatrix}
		\overline{A}_{1} \\
		\vdots \\
		\overline{A}_{n_1}\\
	\end{bmatrix}\ \ 
	\text{with}\ \ 
	\overline{A}_j=
	\begin{bmatrix}
		{\rm vec}(A_{1,j})^\top \\
		\vdots \\
		{\rm vec}(A_{m_1,j})^\top\\
	\end{bmatrix}.
\end{equation}
The following lemma is useful for computing the SVD-STP. 

\begin{lemma}[\cite{GoVa13,VaPi93}]\label{le-ksvd}
	Let $A\in \IR^{m\times n}$ with $m=m_1m_2$ and $n=n_1n_2$. If $\widetilde{A}$ defined by (\ref{eq:RA}) has the SVD 
	\begin{equation}
		U^\top \widetilde{A} V= \Sigma = {\rm diag} (\tilde{\sigma}_1, \dots, \tilde{\sigma}_p),
	\end{equation} 
	where $p=\min\{m_1n_1,m_2n_2\}$, $\tilde{\sigma}_1$ is the largest singular value of $\widetilde{A}$, and $U(:,1)$, $V(:,1)$ are the corresponding left and right singular vectors, respectively, then the matrices $B\in \IR^{m_1\times n_1}$ and $C\in \IR^{m_2\times n_2}$ defined by ${\rm vec}(B)=\sqrt{\tilde{\sigma}_1} U(:,1)$ and ${\rm vec}(C)=\sqrt{\tilde{\sigma}_1}V(:,1)$ minimize 
	$$
	\|A-B\otimes C\|_F.
	$$ 
\end{lemma}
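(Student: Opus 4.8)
The plan is to reduce this nearest-Kronecker-product problem to a best rank-one matrix approximation and then invoke the optimality of the truncated SVD. The whole argument rests on two properties of the rearrangement map $A\mapsto\widetilde{A}$ defined in (\ref{eq:RA}). First I would record that $A\mapsto\widetilde{A}$ is a linear bijection that merely permutes the entries of $A$; hence it is a Frobenius isometry, $\|X\|_F=\|\widetilde{X}\|_F$ for every $X\in\IR^{m\times n}$, and it respects subtraction, $\widetilde{X-Y}=\widetilde{X}-\widetilde{Y}$.

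The key step is to verify the identity
$$
\widetilde{B\otimes C}={\rm vec}(B)\,{\rm vec}(C)^\top
$$
for all $B\in\IR^{m_1\times n_1}$ and $C\in\IR^{m_2\times n_2}$. The $(i,j)$ block of $B\otimes C$ is $B(i,j)\,C$, so its vectorization is $B(i,j)\,{\rm vec}(C)$. Placing these as the rows of $\widetilde{B\otimes C}$ in the order prescribed by (\ref{eq:RA}) --- the column-block index $j$ outer and the row-block index $i$ inner --- produces precisely the outer product ${\rm vec}(B)\,{\rm vec}(C)^\top$, since the same ordering $i+(j-1)m_1$ is the one used by the column-stacking convention that defines ${\rm vec}(B)$.

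Combining the two properties gives $\|A-B\otimes C\|_F=\|\widetilde{A}-{\rm vec}(B)\,{\rm vec}(C)^\top\|_F$, so minimizing the left-hand side over $B$ and $C$ is equivalent to finding the best rank-one approximation $xy^\top$ of $\widetilde{A}$ with $x={\rm vec}(B)$ and $y={\rm vec}(C)$. By the Eckart--Young theorem this optimum is $\tilde{\sigma}_1\,U(:,1)V(:,1)^\top$, and the symmetric splitting $x=\sqrt{\tilde{\sigma}_1}\,U(:,1)$, $y=\sqrt{\tilde{\sigma}_1}\,V(:,1)$ realizes it. Reading these back through ${\rm vec}$ yields exactly the matrices $B$ and $C$ in the statement.

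The step I expect to demand the most care is the index bookkeeping in the identity above: one must check that the row ordering imposed by the block structure in (\ref{eq:RA}) agrees exactly with the column-major ordering of ${\rm vec}(B)$, because a mismatch would produce a row-permuted outer product rather than ${\rm vec}(B)\,{\rm vec}(C)^\top$. Everything else follows directly from the isometry property and from the optimality of the truncated SVD.
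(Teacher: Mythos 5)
Your proof is correct: the rearrangement identity $\widetilde{B\otimes C}=\mathrm{vec}(B)\,\mathrm{vec}(C)^\top$, the fact that $A\mapsto\widetilde{A}$ is an entry permutation (hence a linear Frobenius isometry), and the Eckart--Young theorem together yield exactly the stated minimizers, and your index bookkeeping (column-block index $j$ outer, row index $i$ inner, matching the column-major ordering of $\mathrm{vec}(B)$) is verified correctly. The paper itself gives no proof of this lemma --- it quotes it from \cite{GoVa13,VaPi93} --- and your argument is precisely the standard Van Loan--Pitsianis rearrangement argument of those references, so the approaches coincide.
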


\begin{theorem}[SVD-STP]\label{thm:fullsvd} 
	Let $A\in \IR^{n_1\times n_2}$ and $s_1$, $s_2$ be factors of $n_1$, $n_2$, respectively. Then there exist orthogonal matrices $U\in \IR^{(n_1/s_1)\times (n_1/s_1)}$ and $V\in \IR^{(n_2/s_2)\times (n_2/s_2)}$  such that 
	\begin{equation}\label{eq:gsvd}
		A = U\ltimes \Sigma \ltimes V^\top + E_1=\Sigma \ltimes_1 U\ltimes_2 V + E_1,
	\end{equation}
	where $\Sigma=\blkdiag (S_{1},S_{2},\dots, S_{p}) \in \IR^{n_1\times n_2}$ $(p=\min\{n_1/s_1,n_2/s_2\})$ is a rectangular block-diagonal matrix defined by aligning $S_{1},\dots,S_{p}\in \IR^{s_1\times s_2}$ along the diagonal of $\Sigma$ with $\|S_{1}\|_F\geq \|S_{2}\|_F \geq \cdots \geq \|S_{p}\|_F$, and $E_1$ represents the approximation error.
\end{theorem}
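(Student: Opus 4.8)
The plan is to peel away the semi-tensor-product notation, recast (\ref{eq:gsvd}) as a structured approximation of the Van Loan--Pitsianis rearrangement $\widetilde A$ from (\ref{eq:RA}), then build $U$, $V$ and the blocks $S_i$ from singular value decompositions and finish with a permutation that enforces the norm ordering. First I would rewrite the right-hand side of (\ref{eq:gsvd}) in ordinary matrix language. By (\ref{eq:timesI}) and Proposition \ref{pro:iii} one gets
\[
\Sigma \ltimes_1 U \ltimes_2 V = (U\otimes I_{s_1})\,\Sigma\,(V^{\top}\otimes I_{s_2}),
\]
and since $\Sigma=\blkdiag(S_1,\dots,S_p)$ carries $S_i$ in its $i$-th diagonal block, the Kronecker mixed-product rule $(P\otimes Q)(R\otimes T)=(PR)\otimes(QT)$ gives
\[
\Sigma \ltimes_1 U \ltimes_2 V = \sum_{i=1}^{p}\bigl(U(:,i)\,V(:,i)^{\top}\bigr)\otimes S_i .
\]
So the statement is equivalent to the existence of orthonormal families $\{U(:,i)\}\subset\IR^{n_1/s_1}$ and $\{V(:,i)\}\subset\IR^{n_2/s_2}$ and blocks $S_i\in\IR^{s_1\times s_2}$ with $\|S_1\|_F\ge\cdots\ge\|S_p\|_F$ approximating $A$ in this form, with $E_1$ the residual.

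Next I would pass to the rearrangement. Using the identity ${\rm vec}(uv^{\top})=v\otimes u$ and the (linear) rearrangement underlying Lemma \ref{le-ksvd}, the Frobenius error becomes
\[
\Bigl\|A-\sum_{i=1}^{p}\bigl(U(:,i)V(:,i)^{\top}\bigr)\otimes S_i\Bigr\|_F
=\Bigl\|\widetilde A-\sum_{i=1}^{p}\bigl(V(:,i)\otimes U(:,i)\bigr)\,{\rm vec}(S_i)^{\top}\Bigr\|_F .
\]
The vectors $V(:,i)\otimes U(:,i)$ are orthonormal precisely because $\{U(:,i)\}$ and $\{V(:,i)\}$ are each orthonormal, so for any such $U,V$ the error-minimizing blocks are the orthogonal projections ${\rm vec}(S_i)=\widetilde A^{\top}\bigl(V(:,i)\otimes U(:,i)\bigr)$. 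A short computation shows that with this choice $S_i$ equals the $(i,i)$ block of $(U^{\top}\otimes I_{s_1})A(V\otimes I_{s_2})$, so $\Sigma$ is exactly the block-diagonal part of that transformed matrix and $E_1$ is the image of its off-diagonal blocks.

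It then remains to exhibit orthogonal $U,V$. I would reshape the rows of $\widetilde A$ back into a three-way array (splitting the row index $(i_1,i_2)$ into $i_1\in[n_1/s_1]$ and $i_2\in[n_2/s_2]$) and take $U$ and $V$ to be the left-singular-vector matrices of its first two unfoldings; this guarantees orthogonality and, in the spirit of the HOSVD of the next section, concentrates mass on the retained diagonal blocks. Because $p=\min\{n_1/s_1,\,n_2/s_2\}\le n_1/s_1,\,n_2/s_2$, the first $p$ columns of $U$ and of $V$ supply the needed families, and the remaining columns merely complete the orthogonal bases. Finally, a simultaneous permutation of the columns of $U$ and $V$—which keeps both orthogonal—reorders the blocks so that $\|S_1\|_F\ge\cdots\ge\|S_p\|_F$, and setting $E_1:=A-\sum_i (U(:,i)V(:,i)^{\top})\otimes S_i$ completes the construction.

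The main obstacle is the simultaneous Kronecker/orthogonality constraint: the admissible left factors must have the special form $V(:,i)\otimes U(:,i)$ with the $U(:,i)$ mutually orthonormal \emph{and} the $V(:,i)$ mutually orthonormal, whereas the genuine left singular vectors of $\widetilde A$ are in general not of this Kronecker type. Hence one cannot simply read off the best rank-$p$ approximation of $\widetilde A$ from its SVD, and the decomposition produced above need not minimize $\|E_1\|_F$. The theorem as stated asserts only existence together with the norm ordering, which the projection-plus-permutation argument secures; it is the quantification or optimization of $\|E_1\|_F$ that constitutes the genuine difficulty.
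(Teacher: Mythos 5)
Your construction is correct for the theorem as stated, but it is a genuinely different route from the paper's. The paper's proof is a two-step factorization: it first invokes Lemma~\ref{le-ksvd} to obtain the single best Kronecker approximation $A=B\otimes C+E_1$ (i.e., the best rank-one approximation of the rearrangement $\widetilde A$ of \cref{eq:RA}), and then takes an ordinary full SVD $B=U\Sigma_B V^\top$ of the small factor, so that $\Sigma=\Sigma_B\otimes C$ and every diagonal block is $S_k=\sigma_k C$ --- all blocks proportional to one matrix $C$, with the ordering $\|S_1\|_F\geq\cdots\geq\|S_p\|_F$ inherited from $\sigma_1\geq\cdots\geq\sigma_p$. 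You instead fix orthogonal $U,V$ by an HOSVD-type recipe, take the blocks to be the orthogonal projections ${\rm vec}(S_i)=\widetilde A^\top\bigl(V(:,i)\otimes U(:,i)\bigr)$ --- equivalently the $(i,i)$ blocks of $(U^\top\otimes I_{s_1})A(V\otimes I_{s_2})$ --- and permute columns to enforce the ordering; since $E_1$ is unconstrained in the statement, this does establish existence, and all your intermediate identities (the expansion $\Sigma\ltimes_1 U\ltimes_2 V=\sum_i (U(:,i)V(:,i)^\top)\otimes S_i$, the passage to $\widetilde A$, the orthonormality of $V(:,i)\otimes U(:,i)$) check out. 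The trade-off: the paper's construction is exactly what Algorithm~\ref{alg:FSVD-STP} implements and yields the closed-form error $\|E_1\|_F=\sqrt{\sum_{i=2}^{q}\tilde\sigma_i^2}$ of \cref{eq:e1norm}, on which the subsequent upper-bound theorem for the truncated SVD-STP relies; your construction produces no such formula, so it proves the existence statement but would not support those downstream results as written. Conversely, your family of approximants is strictly richer (the blocks need not be proportional to a common $C$), and for fixed $U,V$ your projection step is optimal --- indeed, applying it with the paper's own $U,V$ can only decrease the error below the paper's $E_1$. Your closing diagnosis is also accurate: neither construction minimizes $\|E_1\|_F$ over all admissible $(U,V,\Sigma)$, and it is that optimization, not existence, which is the genuinely hard problem.
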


\begin{proof}
Given a matrix $A\in \IR^{n_1\times n_2}$ and factors $s_1$, $s_2$, let $B\in \IR^{(n_1/s_1)\times (n_2/s_2)}$ and $C\in \IR^{s_1\times s_2}$, obtained by Lemma~\ref{le-ksvd}, be the solution to the minimization problem
\begin{equation*}
	\min_{B,C} \|A-B \otimes C\|_F.
\end{equation*}
We write 
\begin{equation*}
	A = B \otimes C + E_1,
\end{equation*}
where $E_1 \in \IR^{n_1\times n_2}$ is the approximation error. Let $B=U\Sigma_B V^\top$ be the full SVD of $B$, where $U\in \IR^{(n_1/s_1)\times (n_1/s_1)}$ and $V\in \IR^{(n_2/s_2)\times (n_2/s_2)}$ are orthogonal matrices, and
$$\Sigma_B=\diag (\sigma_1,\sigma_2,\dots,\sigma_p)\in\IR^{(n_1/s_1)\times (n_2/s_2)}$$
with $\sigma_1\geq \sigma_2\geq \dots\geq \sigma_p\geq 0$ and $p=\min\{n_1/s_1,n_2/s_2\}$, is a rectangular diagonal matrix with the singular values of $B$ on the diagonal. Write $C=I_{s_1}CI_{s_2}^\top$. It follows from the properties of Kronecker product that 
\begin{align*}
	A &= B\otimes C +E_1 =(U\otimes I_{s_1})(\Sigma_B \otimes C)(V\otimes I_{s_2})^\top + E_1 \\
	& =U\ltimes \Sigma \ltimes V^\top + E_1 =\Sigma \ltimes_1 U\ltimes_2 V + E_1,
\end{align*}
where $\Sigma=\Sigma_B \otimes C= \blkdiag (S_{1},S_{2},\dots, S_{p})\in \IR^{n_1\times n_2}$ is a rectangular block-diagonal matrix defined by aligning the matrices $S_{1},S_{2},\dots,S_{p}$ along the diagonal of $\Sigma$ with each block $S_{k}=\sigma_k C\in \IR^{s_1\times s_2}$, $1\leq k\leq p$, and obviously $\|S_{1}\|_F\geq \|S_{2}\|_F \geq \cdots \geq \|S_{p}\|_F$.  
\end{proof}

 The following algorithm is given for constructing the approximation (\ref{eq:gsvd}).

\begin{algorithm}
	\caption{SVD-STP}
	\label{alg:FSVD-STP}
	\begin{algorithmic}
		\State Given $A\in \IR^{n_1\times n_2}$ and factors $s_1$, $s_2$.
		\State (1) Compute matrices $B\in \IR^{(n_1/s_1)\times (n_2/s_2)}$ and $C\in \IR^{s_1\times s_2}$, which minimize $\|A-B\otimes C\|_F$, by using Lemma~\ref{le-ksvd}.
		\State (2) Compute the SVD of $B$, i.e., $B=U\Sigma_B V^\top$, where $U\in \IR^{(n_1/s_1)\times (n_1/s_1)}$, $V\in \IR^{(n_2/s_2)\times (n_2/s_2)}$, and $\Sigma_B\in\IR^{(n_1/s_1)\times (n_2/s_2)}.$
		\State (3) Compute $\Sigma=\Sigma_B\otimes C\in \IR^{n_1\times n_2}$.
		
		\Return  $\Sigma, U, V$
	\end{algorithmic}
\end{algorithm}

If $s_1=s_2=1$, the approximation in (\ref{eq:gsvd}) becomes the SVD of matrices. Besides, it follows from Lemma \ref{le-ksvd} that the Frobenius norm of $E_1$ is
\begin{equation}\label{eq:e1norm}
	\|E_1\|_F = \sqrt{\sum_{i=2}^q\tilde{\sigma}^2_i},
\end{equation}
where $q=\min\{s_1s_2,(n_1n_2)/(s_1s_2)\}$ and $\tilde{\sigma}_2, \dots, \tilde{\sigma}_q$ are all the singular values of $\widetilde{A}$, defined by \cref{eq:RA}, except the largest one.

Next, the truncated SVD-STP can be obtained by replacing the full SVD of $B$ with the truncated SVD of $B$ in the proof of Theorem \ref{thm:fullsvd}. Consider the similarity, its proof is omitted.

\begin{theorem}[Truncated SVD-STP]\label{thm:truncatedsvd} 
	Let $A\in \IR^{n_1\times n_2}$ and $s_1$, $s_2$ be factors of $n_1$, $n_2$, respectively. Take any positive integer $r\leq p$ where the integer $p:=\min\{n_1/s_1, n_2/s_2\}$. Suppose that $A = U \ltimes \Sigma \ltimes V^\top + E_1$ is the SVD-STP of $A$. Then the truncated SVD-STP of $A$ is given by
	\begin{equation}\label{eq:gtsvd}
		A = U_r\ltimes \Sigma_r \ltimes V_r^\top + E_2 =\Sigma_r \ltimes_1 U_r\ltimes_2 V_r + E_2,
	\end{equation}
	where $E_2$ is the approximation error, $U_r\in \IR^{(n_1/s_1)\times r}$ and $V_r\in \IR^{(n_2/s_2)\times r}$ contain respectively only the first $r$ columns of the orthogonal matrices $U$ and $V$, and $\Sigma_r=\blkdiag (S_{1}, S_{2},\dots, S_{r})\in \IR^{rs_1\times rs_2}$ contains only the first $r$ diagonal blocks of $\Sigma$ with each block $S_{k}$ being an $s_1$-by-$ s_2$ matrix for $1\leq k\leq r$ and $\|S_{1}\|_F\geq \|S_{2}\|_F \geq \cdots \geq \|S_{r}\|_F$. 
\end{theorem}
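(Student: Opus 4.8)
The plan is to follow the proof of Theorem~\ref{thm:fullsvd} verbatim, making the single change of replacing the full SVD of the Kronecker factor $B$ by its rank-$r$ truncation. Concretely, I would start from the SVD-STP decomposition already in hand, namely $A = B\otimes C + E_1$ with $B\in\IR^{(n_1/s_1)\times(n_2/s_2)}$ and $C\in\IR^{s_1\times s_2}$ obtained from Lemma~\ref{le-ksvd}, where $B = U\Sigma_B V^\top$ is the full SVD with $\Sigma_B=\diag(\sigma_1,\dots,\sigma_p)$ and $\sigma_1\geq\cdots\geq\sigma_p\geq 0$. The matrices $U_r$, $V_r$ in the statement are exactly the first $r$ columns of these $U$, $V$, so the object to analyze is the best rank-$r$ approximation $B_r := U_r\Sigma_{B,r}V_r^\top$ of $B$, with $\Sigma_{B,r}=\diag(\sigma_1,\dots,\sigma_r)$.

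First I would split $B\otimes C = B_r\otimes C + (B-B_r)\otimes C$ and treat the first summand. Using the mixed-product property of the Kronecker product together with $C = I_{s_1}CI_{s_2}^\top$, exactly as in the proof of Theorem~\ref{thm:fullsvd}, I would write
\begin{equation*}
	B_r\otimes C = (U_r\otimes I_{s_1})(\Sigma_{B,r}\otimes C)(V_r\otimes I_{s_2})^\top .
\end{equation*}
Then Proposition~\ref{pro:iii}(ii) converts $(U_r\otimes I_{s_1})\Sigma_r$ into $U_r\ltimes\Sigma_r$, and Proposition~\ref{pro:iii}(i) converts the right multiplication by $(V_r\otimes I_{s_2})^\top$ into $\ltimes\, V_r^\top$, where $\Sigma_r:=\Sigma_{B,r}\otimes C$; associativity (Proposition~\ref{pro:iii}(iii)) then gives $B_r\otimes C = U_r\ltimes\Sigma_r\ltimes V_r^\top$. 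The equivalent modal form $\Sigma_r\ltimes_1 U_r\ltimes_2 V_r$ follows from Definition~\ref{def:mstp} and \cref{eq:timesI}, precisely as in the full-rank case.

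Next I would collect the leftover as the approximation error, setting $E_2 := E_1 + (B-B_r)\otimes C$, so that $A = U_r\ltimes\Sigma_r\ltimes V_r^\top + E_2$. It then remains to verify the claimed block structure and ordering of $\Sigma_r$. Since $\Sigma_{B,r}$ is diagonal, $\Sigma_r=\Sigma_{B,r}\otimes C=\blkdiag(S_1,\dots,S_r)$ with $S_k=\sigma_k C\in\IR^{s_1\times s_2}$, and because $\|S_k\|_F=\sigma_k\|C\|_F$ the inequalities $\|S_1\|_F\geq\cdots\geq\|S_r\|_F$ are inherited directly from $\sigma_1\geq\cdots\geq\sigma_r$.

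I do not expect a genuine obstacle: the argument is structurally identical to Theorem~\ref{thm:fullsvd} and amounts to bookkeeping of Kronecker identities, which is exactly why the authors omit it. The one point deserving a line of care is confirming that truncating $\Sigma=\Sigma_B\otimes C$ to its first $r$ diagonal blocks coincides with forming $\Sigma_{B,r}\otimes C$ --- that is, that the Kronecker product places the block $\sigma_k C$ in the $k$-th diagonal position, so that the $\Sigma_r$ built from the truncated SVD of $B$ really is the leading $r$-block array of the full $\Sigma$. Once this alignment is noted, the identity $B_r\otimes C = U_r\ltimes\Sigma_r\ltimes V_r^\top$, and hence the entire decomposition, follow immediately.
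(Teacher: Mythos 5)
Your proposal is correct and is exactly the argument the paper intends: the authors omit the proof precisely because it amounts to rerunning the proof of Theorem~\ref{thm:fullsvd} with the truncated SVD $B_r=U_r\Sigma_{B,r}V_r^\top$ in place of the full SVD of $B$, which is what you do, including the correct bookkeeping $E_2=E_1+(B-B_r)\otimes C$ and the block-ordering check $\|S_k\|_F=\sigma_k\|C\|_F$.
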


Algorithm \ref{alg:TSVD-STP} given for computing the approximation \cref{eq:gtsvd} is similar to Algorithm~\ref{alg:FSVD-STP}. Just use the truncated SVD of $B$ instead of the full version.

\begin{algorithm}
	\caption{Truncated SVD-STP}
	\label{alg:TSVD-STP}
	\begin{algorithmic}
		\State Given $\CA\in \IR^{n_1\times n_2}$, factors $s_1$, $s_2$, and an integer $r$.
		
		\State (1) Compute matrices $B\in \IR^{(n_1/s_1)\times (n_2/s_2)}$ and $C\in \IR^{s_1\times s_2}$, which minimize $\|A-B\otimes C\|_F$, by using Lemma~\ref{le-ksvd}.
		
		\State (2) Compute the truncated SVD $B=U_r\Sigma_{B,r} V^\top_r$, where $U_r\in \IR^{(n_1/s_1)\times r}$, $V_r\in \IR^{(n_2/s_2)\times r}$, and $\Sigma_{B,r}\in\IR^{r\times r}$ is a diagonal matrix with the singular values of $B$ on the diagonal.
		
		\State (3) Compute $\Sigma_r=\Sigma_{B,r}\otimes C\in \IR^{rs_1\times rs_2}$ which is a rectangular block-diagonal matrix with each block being an $s_1$-by-$s_2$ matrix. 
		
		\Return $\Sigma_r, U_r, V_r$
	\end{algorithmic}
\end{algorithm}

Considering that the truncated SVD-STP is an approximation of the given matrix, we propose an upper bound for the approximation error.

\begin{theorem}[Upper bound for approximation error]
	With the same assumptions and symbols as in Theorem \ref{thm:truncatedsvd}, we have the following upper bound for the approximation error $E_2$:
	\begin{equation}\label{eq:upb1}
		\|E_2\|_F \leq \sqrt{\sum_{i=2}^q\tilde{\sigma}^2_i} + \sqrt{\sum_{i=r+1}^p \|S_{i}\|_F^2},
	\end{equation}
	where $q=\min\{s_1s_2,(n_1n_2)/(s_1s_2)\}$ and $\tilde{\sigma}_2, \dots, \tilde{\sigma}_q$ are all the singular values of $\widetilde{A}$, defined by \cref{eq:RA}, except the largest one.
\end{theorem}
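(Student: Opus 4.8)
The plan is to bound $E_2$ by a triangle inequality after identifying the truncated decomposition with a Kronecker product, exactly paralleling the proof of Theorem~\ref{thm:fullsvd}. First I would recall that Lemma~\ref{le-ksvd} produces $A = B\otimes C + E_1$ with $B\in\IR^{(n_1/s_1)\times(n_2/s_2)}$ and $C\in\IR^{s_1\times s_2}$, and that writing the full SVD $B=U\Sigma_B V^\top$ gives the exact identity $U\ltimes\Sigma\ltimes V^\top = B\otimes C$ with $\Sigma=\Sigma_B\otimes C$. Truncating the SVD of $B$ to rank $r$ replaces $B$ by its rank-$r$ truncation $B_r:=U_r\Sigma_{B,r}V_r^\top$, and rerunning the Kronecker manipulation of Theorem~\ref{thm:fullsvd} with the truncated factors yields
$$
U_r\ltimes\Sigma_r\ltimes V_r^\top = B_r\otimes C,\qquad \Sigma_r=\Sigma_{B,r}\otimes C.
$$
Consequently $E_2 = A - B_r\otimes C$ holds \emph{exactly}.

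Next I would split the error additively as
$$
E_2 = \bigl(A - B\otimes C\bigr) + \bigl(B - B_r\bigr)\otimes C = E_1 + (B-B_r)\otimes C,
$$
so that the triangle inequality for the Frobenius norm gives $\|E_2\|_F \le \|E_1\|_F + \|(B-B_r)\otimes C\|_F$. The first term is already available: by \cref{eq:e1norm}, $\|E_1\|_F = \sqrt{\sum_{i=2}^q\tilde{\sigma}_i^2}$, which is precisely the first summand of \cref{eq:upb1}. For the second term I would use multiplicativity of the Frobenius norm under the Kronecker product, $\|(B-B_r)\otimes C\|_F = \|B-B_r\|_F\,\|C\|_F$, together with the Eckart--Young theorem, $\|B-B_r\|_F = \sqrt{\sum_{i=r+1}^p\sigma_i^2}$, where $\sigma_1\ge\cdots\ge\sigma_p$ are the singular values of $B$.

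Finally I would reconcile this with the block quantities $\|S_i\|_F$. Since each diagonal block of $\Sigma$ is $S_i=\sigma_i C$, one has $\|S_i\|_F=\sigma_i\|C\|_F$, hence
$$
\sum_{i=r+1}^p\|S_i\|_F^2 = \|C\|_F^2\sum_{i=r+1}^p\sigma_i^2 = \bigl(\|B-B_r\|_F\,\|C\|_F\bigr)^2,
$$
so that $\|(B-B_r)\otimes C\|_F = \sqrt{\sum_{i=r+1}^p\|S_i\|_F^2}$, the second summand of \cref{eq:upb1}. Substituting both pieces into the triangle inequality proves the claim. I expect no genuine obstacle here, as every step reduces to the triangle inequality, the Kronecker-norm identity, or Eckart--Young; the one point deserving care is verifying that $U_r\ltimes\Sigma_r\ltimes V_r^\top = B_r\otimes C$ holds exactly, since the whole argument hinges on $E_2$ being precisely $A - B_r\otimes C$. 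This identity follows by reusing the Kronecker computation already carried out in Theorem~\ref{thm:fullsvd} with $U,\Sigma_B,V$ replaced by their truncations $U_r,\Sigma_{B,r},V_r$.
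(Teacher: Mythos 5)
Your proof is correct and takes essentially the same route as the paper: both arguments split $E_2$ by the triangle inequality at the full SVD-STP approximation $U\ltimes\Sigma\ltimes V^\top = B\otimes C$, identify the first term with $\|E_1\|_F=\sqrt{\sum_{i=2}^q\tilde{\sigma}_i^2}$ via \cref{eq:e1norm}, and show the second term equals $\sqrt{\sum_{i=r+1}^p\|S_i\|_F^2}$ exactly. The only difference is bookkeeping in that second term: the paper writes $U_r\ltimes\Sigma_r\ltimes V_r^\top = U\ltimes\blkdiag(S_1,\dots,S_r,O,\dots,O)\ltimes V^\top$ and invokes orthogonal invariance of the Frobenius norm, whereas you work at the compressed level through the identity $U_r\ltimes\Sigma_r\ltimes V_r^\top = B_r\otimes C$ (which is indeed exact, by the mixed-product rule $(U_r\otimes I_{s_1})(\Sigma_{B,r}\otimes C)(V_r^\top\otimes I_{s_2}) = (U_r\Sigma_{B,r}V_r^\top)\otimes C$) combined with $\|X\otimes Y\|_F=\|X\|_F\,\|Y\|_F$ and $\|S_i\|_F=\sigma_i\|C\|_F$, arriving at the same quantity.
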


\begin{proof}
 Let $\widecheck{A} = U\ltimes \Sigma \ltimes V^\top$ be the SVD-STP of $A$. For the approximation error $E_2$ in Theorem \ref{thm:truncatedsvd}, we have
\begin{align*}
	\|E_2\|_F & = \big\|A-U_r \ltimes \Sigma_r \ltimes V_r^\top \big\|_F \\
	& = \big\|A-\widecheck{A}+\widecheck{A} -U_r \ltimes \Sigma_r \ltimes V_r^\top \big\|_F \\
	& \leq  \big\|A-\widecheck{A} \,\big\|_F + \big\|\widecheck{A} -U_r \ltimes \Sigma_r \ltimes V_r^\top \big\|_F. 
\end{align*}

For the first part $\big\|A-\widecheck{A}\,\big\|_F$, it follows from the proof of Theorem \ref{thm:fullsvd} and \cref{eq:e1norm} that
\begin{equation*}
	\big\|A-\widecheck{A}\,\big\|_F  =\big\|A -U \ltimes \Sigma \ltimes V^\top \big\|_F 
	= \|A-B\otimes C\|_F = \|E_1\|_F= \sqrt{\sum_{i=2}^q\tilde{\sigma}^2_i},
\end{equation*}
where $q=\min\{s_1s_2,(n_1n_2)/(s_1s_2)\}$ and $\tilde{\sigma}_2, \dots, \tilde{\sigma}_q$ are all the singular values of $\widetilde{A}$, defined by \cref{eq:RA}, except the largest one.

As for the second part $\big\|\widecheck{A} -U_r \ltimes \Sigma_r \ltimes V_r^\top \big\|_F$, since $U_r$ and $V_r$ contain respectively only the first $r$ columns of the orthogonal matrices $U$ and $V$ and $\Sigma_r$ contains only the first $r$ diagonal blocks of $\Sigma$, we have from the orthogonal invariance of the Frobenius norm that 
\begin{align*}
	& \big\|\widecheck{A} -U_r \ltimes \Sigma_r \ltimes V_r^\top \big\|_F  = \big\|U \ltimes \Sigma \ltimes V^\top-U_r \ltimes \Sigma_r \ltimes V_r^\top \big\|_F\\
	= & \big\|U \ltimes \Sigma \ltimes V^\top-U \ltimes \blkdiag (S_{1},\dots, S_{r},O,\dots,O) \ltimes V^\top \big\|_F\\
	= & \big\|(U\otimes I_{s_1}) \blkdiag (O, \dots,O, S_{r+1},\dots,S_{p}) (V \otimes I_{s_2})^\top \big\|_F\\
	= & \|\blkdiag (O, \dots,O, S_{r+1},\dots,S_{p})\|_F = \sqrt{\sum_{i=r+1}^p \|S_{i}\|_F^2}.
\end{align*}

Combining the above two parts gives the bound \cref{eq:upb1} for $\|E_2\|_F$.
\end{proof}

\subsection{The higher-order case: HOSVD}

Next the higher-order case is considered. Given a tensor $\CA\in \IR^{n_1\times n_2\times \cdots \times n_d}$ and a target rank $\br = (r_1,\dots, r_d)$, the idea of the Tucker approximation is to find a ``core tensor'' $\CB\in \IR^{r_1\times r_2\times \cdots \times r_d}$ and matrices $U^{(i)}\in \IR^{n_i \times r_i}$ (usually orthogonal) for $1\leq i\leq d$, such that
\begin{equation*}
	\CA= \CB \times_1 U^{(1)}\times_2 U^{(2)}\times_3 \cdots \times_d U^{(d)}+\CE,
\end{equation*}
where the tensor $\CE\in \IR^{n_1\times n_2\times \cdots \times n_d}$ represents the approximation error. This leads to the following optimization problem:
\begin{equation} \begin{split}
		\text{minimize} \ \ & \ \ \|\CA-\CB \times_1 U^{(1)}\times_2 U^{(2)}\times_3 \cdots \times_d U^{(d)}\|_F \\
		\text{subject to}\ \  & \ \ U^{(i)\top} U^{(i)} = I \ \ \text{for}\ \ i=1,\dots, d.
\end{split} \end{equation}
Such a best rank-$(r_1, r_2,\dots, r_d)$ approximation of a $d$th-order tensor can be derived by the HOOI algorithm \cite{KrLe80,LaMoVa20_2}. The HOSVD can serve as a good starting point for the HOOI algorithm.

The following two theorems are respectively the HOSVD and the truncated HOSVD based on the STP, and Algorithms \ref{alg:fhosvd} and \ref{alg:thosvd} are given to construct these two Tucker-like approximations. For two collections of positive integers $n_1, n_2, \dots, n_d$ and $s_1, s_2, \dots, s_d$, and $1\leq k\leq d$, let $s_k$ be a factor of $n_k$. In the following content, we denote 
\begin{equation}\label{eq:nkskpk}
	n_k^\prime := \frac{n_1n_2\cdots n_d}{n_k},\  s_k^\prime := \frac{s_1s_2\cdots s_d}{s_k},\ \text{and} \ p_k := \min\Big\{\frac{n_k}{s_k}, \frac{n_k^\prime}{s_k^\prime}\Big\}
\end{equation}
for $1\leq k \leq d$.

\begin{theorem}[HOSVD-STP]\label{thm:full-hosvd-stp}
	Suppose that $\CA\in \IR^{n_1\times n_2\times \cdots \times n_d}$ and $s_k$ is a factor of $n_k$ for $1\leq k\leq d$. Then there exists a sequence of orthogonal matrices $U^{(k)}\in \IR^{(n_k/s_k)\times (n_k/s_k)}$ for $1\leq k\leq d$, such that 
	\begin{equation}
		\CA = \CB \ltimes_1 U^{(1)}\ltimes_2 U^{(2)}\ltimes_3 \cdots \ltimes_d U^{(d)},
	\end{equation}
	where $\CB\in \IR^{n_1\times n_2\times \cdots \times n_d}$ has the property that its mode-$k$ unfolding $\CB_{(k)}$ is a matrix with orthogonal row blocks for all possible values of $k$. More precisely, there exists a sequence of matrices $S^{(k)}_1,S^{(k)}_2,\dots, S^{(k)}_{p_k}\in \IR^{s_k\times s_k}$ such that
	\begin{equation}\label{eq:BkBkT}
		\CB_{(k)}\CB_{(k)}^\top \approx \blkdiag (S^{(k)}_1 S^{(k)\top}_1,S^{(k)}_2 S^{(k)\top}_2,\dots, S^{(k)}_{p_k} S^{(k)\top}_{p_k})\in \IR^{n_k\times n_k}
	\end{equation}
	with
	$$
	\|S^{(k)}_1\|_F\geq \|S^{(k)}_2\|_F \geq \cdots \geq \|S^{(k)}_{p_k}\|_F,
	$$
	where $p_k$ is defined by \cref{eq:nkskpk} for all $1\leq k \leq d$.
\end{theorem}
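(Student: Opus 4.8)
The plan is to mimic the classical construction of the HOSVD, replacing the matrix SVD of each unfolding by the SVD-STP of Theorem~\ref{thm:fullsvd}. First I would, for each mode $k$, form the unfolding $\CA_{(k)}\in\IR^{n_k\times n_k^\prime}$ and apply the SVD-STP with row factor $s_k$ (a factor of $n_k$) and column factor $s_k^\prime$ (which is a factor of $n_k^\prime$, since $s_j$ divides $n_j$ for every $j\neq k$). This produces orthogonal matrices $U^{(k)}\in\IR^{(n_k/s_k)\times(n_k/s_k)}$ and $V^{(k)}$, together with a rectangular block-diagonal $\Sigma^{(k)}=\blkdiag(S^{(k)}_1,\dots,S^{(k)}_{p_k})$ (with $p_k$ as in \cref{eq:nkskpk} and $\|S^{(k)}_1\|_F\geq\cdots\geq\|S^{(k)}_{p_k}\|_F$) such that $\CA_{(k)}\approx (U^{(k)}\otimes I_{s_k})\,\Sigma^{(k)}\,(V^{(k)}\otimes I_{s_k^\prime})^\top$ via Lemma~\ref{le-ksvd}. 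The column factor $s_k^\prime$ is exactly the factor induced on the columns of $\CA_{(k)}$ by the modal STPs acting in the remaining modes, which is what makes the global decomposition consistent.

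With these $U^{(k)}$ fixed, I would define the core by $\CB:=\CA\ltimes_1 U^{(1)\top}\ltimes_2\cdots\ltimes_d U^{(d)\top}$. Because each $U^{(k)}$ is square and orthogonal, Proposition~\ref{pro:T} immediately yields the exact identity $\CA=\CB\ltimes_1 U^{(1)}\ltimes_2\cdots\ltimes_d U^{(d)}$, establishing the first claim with no error term.

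The heart of the argument is the block-orthogonality \cref{eq:BkBkT}. Fixing $k$, I would unfold $\CB$ using Proposition~\ref{pro:unfold}: writing $\widehat{U^{(k)}}=U^{(k)}\otimes I_{s_k}$, one gets $\CB_{(k)}=\widehat{U^{(k)}}^\top\,\CA_{(k)}\,W$, where $W$, being a (transposed) Kronecker product of the orthogonal matrices $\widehat{U^{(j)}}$ for $j\neq k$, is orthogonal. Then $WW^\top=I$ gives $\CB_{(k)}\CB_{(k)}^\top=\widehat{U^{(k)}}^\top\,\CA_{(k)}\CA_{(k)}^\top\,\widehat{U^{(k)}}$. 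Substituting the SVD-STP approximation of $\CA_{(k)}$ from the first step and using the orthogonality of $V^{(k)}\otimes I_{s_k^\prime}$ and of $\widehat{U^{(k)}}$ collapses this to $\Sigma^{(k)}\Sigma^{(k)\top}=\blkdiag(S^{(k)}_1 S^{(k)\top}_1,\dots,S^{(k)}_{p_k}S^{(k)\top}_{p_k})$, which is precisely \cref{eq:BkBkT}; the norm ordering is inherited directly from the SVD-STP. Carrying this out for every $k$ finishes the proof.

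I expect the main obstacle to be the appearance of the approximation sign in \cref{eq:BkBkT}. Unlike the ordinary SVD, which diagonalizes $\CA_{(k)}\CA_{(k)}^\top$ exactly, here $U^{(k)}$ diagonalizes only the nearest Kronecker factor of $\CA_{(k)}$, so the reduction to block-diagonal form carries the Kronecker-fit error of Lemma~\ref{le-ksvd}; pinning down that this is the only source of inexactness, while the decomposition $\CA=\CB\ltimes_1\cdots\ltimes_d U^{(d)}$ itself stays exact, is the delicate point. The remaining care is purely bookkeeping: tracking the hat notation through Proposition~\ref{pro:unfold} and checking that the Kronecker factors over the modes $j\neq k$ are genuinely orthogonal so that they cancel in $\CB_{(k)}\CB_{(k)}^\top$.
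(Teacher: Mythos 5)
Your proof is correct and follows essentially the same route as the paper's: apply the SVD-STP (Theorem~\ref{thm:fullsvd}) to each unfolding $\CA_{(k)}$, define the core $\CB=\CA\ltimes_1 U^{(1)\top}\ltimes_2\cdots\ltimes_d U^{(d)\top}$, recover $\CA$ exactly via Proposition~\ref{pro:T} using the orthogonality of the square factors $U^{(k)}$, and obtain \cref{eq:BkBkT} from Proposition~\ref{pro:unfold} together with the orthogonality of $\widehat{V^{(k)}}$ and the Kronecker product of the $\widehat{U^{(j)}}$, $j\neq k$. The only cosmetic difference is that you cancel the orthogonal factor $W$ to get $\CB_{(k)}\CB_{(k)}^\top=\widehat{U^{(k)}}^\top\CA_{(k)}\CA_{(k)}^\top\widehat{U^{(k)}}$ before inserting the SVD-STP approximation, whereas the paper substitutes the decomposition into $\CB_{(k)}$ first and then drops the error term $\widehat{U^{(k)}}^\top E_1^{(k)}\Pi^{(-k)}$; both correctly isolate the Kronecker-fit error of Lemma~\ref{le-ksvd} as the sole source of the approximation sign.
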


\begin{proof}
Given $\CA\in \IR^{n_1\times n_2\times \cdots \times n_d}$, let $s_k$ be a factor of $n_k$ for all $1\leq k\leq d$. Consider the SVD-STP (see Theorem \ref{thm:fullsvd}) of the mode-$k$ unfolding $\CA_{(k)}$:
\begin{equation}\label{eq:AkSVDSTP}
	\CA_{(k)} = U^{(k)} \ltimes \Sigma^{(k)} \ltimes V^{(k)\top} + E_1^{(k)}=\widehat{U^{(k)}}\Sigma^{(k)} \widehat{V^{(k)}}^\top+ E_1^{(k)},
\end{equation}
where $E_1^{(k)}$ is the mode-$k$ unfolding approximation error, $\widehat{U^{(k)}}=U^{(k)} \otimes I_{s_k}$ and $\widehat{V^{(k)}}=V^{(k)} \otimes I_{s_k^\prime}$ are orthogonal matrices, and 
$$
\Sigma^{(k)}=\blkdiag (S^{(k)}_1,S^{(k)}_2,\dots, S^{(k)}_{p_k}) \in \IR^{n_k\times n_k^\prime}
$$
with
$$
\|S^{(k)}_1\|_F\geq \|S^{(k)}_2\|_F \geq \cdots \geq \|S^{(k)}_{p_k}\|_F
$$
and $n_k^\prime$, $s_k^\prime$, and $p_k$ defined by \cref{eq:nkskpk} for all $1\leq k \leq d$. Denote
\begin{equation}\label{eq:BA}
	\CB = \CA \ltimes_1 U^{(1)\top} \ltimes_2 U^{(2)\top}\ltimes_3 \cdots \ltimes_d U^{(d)\top}
\end{equation}
and 
$$ 
\Pi^{(-k)}=\widehat{U^{(d)}}\otimes \cdots \otimes \widehat{U^{(k+1)}}\otimes \widehat{U^{(k-1)}} \otimes \cdots \otimes \widehat{U^{(1)}}.
$$
Using Propositions \ref{pro:T} and \ref{pro:unfold},  (\ref{eq:BA}) can be rewritten as in its mode-$k$ unfolding
\begin{equation}\label{eq:Ak1}
	\CB_{(k)} = \widehat{U^{(k)}}^\top \CA_{(k)} \Pi^{(-k)}\in \IR^{n_k \times n_k^\prime}.
\end{equation}
Combining \cref{eq:AkSVDSTP} and \cref{eq:Ak1} gives
\begin{equation*}
	\CB_{(k)} = \Sigma^{(k)} \widehat{V^{(k)}}^\top \Pi^{(-k)} + \widehat{U^{(k)}}^\top E_1^{(k)}\Pi^{(-k)}.
\end{equation*}
By omitting the error part $\widehat{U^{(k)}}^\top E_1^{(k)}\Pi^{(-k)}$, it follows from the orthogonality of $\widehat{V^{(k)}}$ and $\Pi^{(-k)}$ that
\begin{align*}
	& \CB_{(k)}\CB_{(k)}^\top  \approx \Sigma^{(k)} \Sigma^{(k)\top}\\
	= & \blkdiag (S^{(k)}_1,S^{(k)}_2,\dots, S^{(k)}_{p_k}) \blkdiag (S^{(k)\top}_1,S^{(k)\top}_2,\dots, S^{(k)\top}_{p_k})\\
	= & \blkdiag (S^{(k)}_1 S^{(k)\top}_1,S^{(k)}_2 S^{(k)\top}_2,\dots, S^{(k)}_{p_k} S^{(k)\top}_{p_k})
	\in \IR^{n_k\times n_k}
\end{align*}
with $ \|S^{(k)}_1\|_F\geq \|S^{(k)}_2\|_F \geq \cdots \geq \|S^{(k)}_{p_k}\|_F$ for all $1\leq k \leq d$, and we can write 
\begin{equation*}
	\CA = \CB \ltimes_1 U^{(1)}\ltimes_2 U^{(2)}\ltimes_3 \cdots \ltimes_d U^{(d)}
\end{equation*}
by Proposition \ref{pro:T} and the orthogonality of $U^{(k)}$ for $1 \leq k \leq d$.
\end{proof}

It should be noted that, although HOSVD is a higher-order generalization of SVD, in Theorem \ref{thm:full-hosvd-stp}, the conclusion when $d=2$ is slightly different from the result of Theorem \ref{thm:fullsvd}.

\begin{algorithm}
	\caption{HOSVD-STP}
	\label{alg:fhosvd}
	\begin{algorithmic}
		\State Given a tensor $\CA\in \IR^{n_1\times n_2\times \cdots \times n_d}$, $n=n_1n_2\cdots n_d$, and a factor vector $\bs$=$(s_1, s_2, \dots, s_d)$.
		
		\For{ $k = 1,\dots, d$}
		
		\State Compute the factor $U\in \IR^{(n_k/s_k)\times (n_k/s_k)}$ of $\CA_{(k)}\in \IR^{n_k\times (n/n_k)}$ in the decomposition (\ref{eq:gsvd}) by Algorithm~\ref{alg:FSVD-STP}.
			
		\State $U^{(k)} \leftarrow U$
		
		\EndFor
		
		\State $\CB \leftarrow \CA \ltimes_1 U^{(1)\top}\ltimes_2 U^{(2)\top}\ltimes_3 \cdots \ltimes_d U^{(d)\top}$ 
		
		\Return $\CB, U^{(1)}, U^{(2)}, \dots, U^{(d)}$
	\end{algorithmic}
\end{algorithm}


The following theorem is the truncated version of HOSVD-STP. Since its proof is similar to the one of Theorem \ref{thm:full-hosvd-stp}, we omit it.

\begin{theorem}[Truncated HOSVD-STP]\label{thm:trun-hosvd-stp}
	Given $\CA\in \IR^{n_1\times n_2\times \cdots \times n_d}$, for $1\leq k\leq d$, let $s_k$ be a factor of $n_k$, and take any positive integer $r_k\leq p_k$ where $p_k$ is defined by \cref{eq:nkskpk}. Suppose that $	\CA = \CB \ltimes_1 U^{(1)}\ltimes_2 U^{(2)}\ltimes_3 \cdots \ltimes_d U^{(d)}$ is the HOSVD-STP of $\CA$. Then the truncated HOSVD-STP of $\CA$ is given by
	\begin{equation}\label{eq:trunHOSVD-STP}
		\CA = \CB_{\br} \ltimes_1 U_{r_1}^{(1)}\ltimes_2 U_{r_2}^{(2)}\ltimes_3 \cdots \ltimes_d U_{r_d}^{(d)} + \CE_1,
	\end{equation}
	where $\CE_1$ represents the approximation error, $U^{(k)}_{r_k}\in \IR^{(n_k / s_k)\times r_k}$ contains only the first $r_k$ columns of $U^{(k)}$, and
	\begin{equation}
		\CB_{\br (k)}\CB_{\br (k)}^\top \approx \blkdiag (S^{(k)}_1 S^{(k)\top}_1,S^{(k)}_2 S^{(k)\top}_2,\dots, S^{(k)}_{r_k}S^{(k)\top}_{r_k})\in \IR^{r_ks_k\times r_ks_k}
	\end{equation}
	contains only the first $r_k$ diagonal blocks of $\CB_{(k)}\CB_{(k)}^\top$ with each diagonal block $S^{(k)}_i S^{(k)\top}_i$ being an $s_k$-by-$s_k$ matrix and 
	$$
	\|S^{(k)}_1\|_F\geq \|S^{(k)}_2\|_F \geq \cdots \geq \|S^{(k)}_{r_k}\|_F
	$$
	for all $1\leq k \leq d$.
\end{theorem}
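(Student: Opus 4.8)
The plan is to follow the proof of Theorem~\ref{thm:full-hosvd-stp} almost verbatim, replacing the full SVD-STP of each mode-$k$ unfolding by its truncated counterpart from Theorem~\ref{thm:truncatedsvd}. First I would apply Theorem~\ref{thm:truncatedsvd} to each $\CA_{(k)}$ to write
\[
\CA_{(k)} = U^{(k)}_{r_k}\ltimes \Sigma^{(k)}_{r_k}\ltimes V^{(k)\top}_{r_k} + E_2^{(k)} = \widehat{U^{(k)}_{r_k}}\,\Sigma^{(k)}_{r_k}\,\widehat{V^{(k)}_{r_k}}^\top + E_2^{(k)},
\]
where $U^{(k)}_{r_k}$ collects the first $r_k$ columns of the orthogonal factor $U^{(k)}$ produced by the full HOSVD-STP, the matrices $\widehat{U^{(k)}_{r_k}}=U^{(k)}_{r_k}\otimes I_{s_k}$ and $\widehat{V^{(k)}_{r_k}}=V^{(k)}_{r_k}\otimes I_{s_k^\prime}$ have orthonormal columns, and $\Sigma^{(k)}_{r_k}=\blkdiag(S^{(k)}_1,\dots,S^{(k)}_{r_k})$ retains the $r_k$ largest blocks.

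Next I would set the truncated core $\CB_{\br}=\CA\ltimes_1 U^{(1)\top}_{r_1}\ltimes_2\cdots\ltimes_d U^{(d)\top}_{r_d}$ and define $\CE_1:=\CA-\CB_{\br}\ltimes_1 U^{(1)}_{r_1}\ltimes_2\cdots\ltimes_d U^{(d)}_{r_d}$, so that the decomposition \cref{eq:trunHOSVD-STP} holds by construction. Writing $\Pi^{(-k)}_{\br}=\widehat{U^{(d)}_{r_d}}\otimes\cdots\otimes\widehat{U^{(k+1)}_{r_{k+1}}}\otimes\widehat{U^{(k-1)}_{r_{k-1}}}\otimes\cdots\otimes\widehat{U^{(1)}_{r_1}}$, Proposition~\ref{pro:unfold} gives the exact mode-$k$ unfolding $\CB_{\br(k)}=\widehat{U^{(k)}_{r_k}}^\top\CA_{(k)}\,\Pi^{(-k)}_{\br}$. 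Substituting the truncated SVD-STP of $\CA_{(k)}$ and using the orthonormality $\widehat{U^{(k)}_{r_k}}^\top\widehat{U^{(k)}_{r_k}}=I_{r_ks_k}$ yields
\[
\CB_{\br(k)} = \Sigma^{(k)}_{r_k}\widehat{V^{(k)}_{r_k}}^\top\Pi^{(-k)}_{\br} + \widehat{U^{(k)}_{r_k}}^\top E_2^{(k)}\Pi^{(-k)}_{\br}.
\]

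To reach the block-diagonal conclusion I would discard the truncation-error summand and form the Gram matrix $\CB_{\br(k)}\CB_{\br(k)}^\top\approx\Sigma^{(k)}_{r_k}\big(\widehat{V^{(k)}_{r_k}}^\top\Pi^{(-k)}_{\br}\Pi^{(-k)\top}_{\br}\widehat{V^{(k)}_{r_k}}\big)\Sigma^{(k)\top}_{r_k}$. Treating the bracketed factor as the identity collapses this to $\Sigma^{(k)}_{r_k}\Sigma^{(k)\top}_{r_k}=\blkdiag(S^{(k)}_1 S^{(k)\top}_1,\dots,S^{(k)}_{r_k}S^{(k)\top}_{r_k})$, and the ordering $\|S^{(k)}_1\|_F\geq\cdots\geq\|S^{(k)}_{r_k}\|_F$ is inherited directly from Theorem~\ref{thm:truncatedsvd}. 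Lemma~\ref{le:stp-property}(i) is invoked throughout to commute the modal factors, so that the multi-mode products above are unambiguous.

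The step I expect to be the main obstacle is justifying that the bracketed factor behaves like the identity. In the full-rank proof of Theorem~\ref{thm:full-hosvd-stp} the analogous matrix $\Pi^{(-k)}$ is \emph{square} orthogonal, so $\Pi^{(-k)}\Pi^{(-k)\top}=I$ holds exactly; after truncation $\Pi^{(-k)}_{\br}$ is tall with merely orthonormal \emph{columns}, so $\Pi^{(-k)}_{\br}\Pi^{(-k)\top}_{\br}$ is only an orthogonal projection and $\widehat{V^{(k)}_{r_k}}^\top\Pi^{(-k)}_{\br}\Pi^{(-k)\top}_{\br}\widehat{V^{(k)}_{r_k}}$ need not equal $I$. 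This is precisely why \cref{eq:trunHOSVD-STP} can assert the block structure only up to ``$\approx$'': both the discarded terms $\widehat{U^{(k)}_{r_k}}^\top E_2^{(k)}\Pi^{(-k)}_{\br}$ and the deviation of this projection from the identity are absorbed into $\CE_1$. I would therefore state the block-diagonal conclusion with ``$\approx$'', matching the theorem, and note that this additional projection error is the genuine new feature that distinguishes the truncated case from the exact decomposition of Theorem~\ref{thm:full-hosvd-stp}.
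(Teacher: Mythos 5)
Your proposal is correct and follows exactly the route the paper intends: the paper omits this proof, saying only that it is similar to that of Theorem~\ref{thm:full-hosvd-stp}, and your argument is precisely that adaptation, substituting the truncated SVD-STP of Theorem~\ref{thm:truncatedsvd} for the full one in each mode and reading off $\CB_{\br(k)}$ via Proposition~\ref{pro:unfold}. Your additional observation---that after truncation $\Pi^{(-k)}_{\br}$ has only orthonormal columns, so $\Pi^{(-k)}_{\br}\Pi^{(-k)\top}_{\br}$ is merely an orthogonal projection and its deviation from the identity must be absorbed into the ``$\approx$'' along with the discarded error term---is the one genuine feature distinguishing the truncated case from the full one, and you handle it correctly.
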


\begin{algorithm}
	\caption{Truncated HOSVD-STP}
	\label{alg:thosvd}
	\begin{algorithmic}
		\State Given $\CA\in \IR^{n_1\times n_2\times \cdots \times n_d}$, $n=n_1n_2\cdots n_d$, a factor vector $\bs$=$(s_1, s_2, \dots, s_d)$, and a vector $\br=(r_1, r_2, \dots, r_d)$.
		
		\For{$k = 1,\dots, d$}
		
		\State Compute the factor $U_{r_k}\in \IR^{(n_k/s_k)\times r_k}$ of $\CA_{(k)}\in \IR^{n_k\times (n/n_k)}$ in the decomposition (\ref{eq:gtsvd}) by Algorithm~\ref{alg:TSVD-STP}.
		
		\State $U_{r_k}^{(k)} \leftarrow U_{r_k}$
		
		\EndFor
		
		\State $\CB_{\br} \leftarrow \CA \ltimes_1 U_{r_1}^{(1)\top}\ltimes_2 U_{r_2}^{(2)\top}\ltimes_3 \cdots \ltimes_d U_{r_d}^{(d)\top}$ \vspace{0.2cm}
		
		\Return $\CB_{\br}, U_{r_1}^{(1)}, U_{r_2}^{(2)}, \dots, U_{r_d}^{(d)}$
	\end{algorithmic}
\end{algorithm}

In Theorem \ref{thm:trun-hosvd-stp}, the approximation $\CA \approx \CB_{\br} \ltimes_1 U_{r_1}^{(1)}\ltimes_2 U_{r_2}^{(2)}\ltimes_3 \cdots \ltimes_d U_{r_d}^{(d)}$ can also be represented as
\begin{equation}
	\mathcal{A}\approx \mathcal{A}\ltimes_1(U_{r_1}^{(1)}U_{r_1}^{(1)\top})\ltimes_2(U_{r_2}^{(2)}U_{r_2}^{(2)\top})\ltimes_3\dots\ltimes_d (U_{r_d}^{(d)}U_{r_d}^{(d)\top}),
\end{equation}
where $\mathcal{A}\ltimes_k (U_{r_k}^{(k)}U_{r_k}^{(k)\top})$ can be seen as a STP-based multilinear orthogonal projection \cite{VaVaMe12} along mode $k$. By the similar error analysis in \cite[Section 5]{VaVaMe12}, we can obtain an upper bound for the approximation error $\CE_1$ in Theorem \ref{thm:trun-hosvd-stp}.

\begin{theorem}[Upper bound for approximation error]
	With the same assumptions and symbols as in Theorem \ref{thm:trun-hosvd-stp}, we have the following upper bound for the approximation error $\CE_1$:
	\begin{equation}
		\|\CE_1\|_F \leq \sqrt{\sum_{k=1}^{d}\sum_{i=2}^{q_k}\tilde{\sigma}^{(k)2}_i} + \sqrt{\sum_{k=1}^{d}\sum_{i=r_k+1}^{p_k} \|S_{i}^{(k)}\|_F^2},
	\end{equation}
	where $q_k=\min\{s_ks_k^\prime,(n_kn_k^\prime)/(s_ks_k^\prime)\}$ and $\tilde{\sigma}_2^{(k)}, \dots, \tilde{\sigma}_{q_k}^{(k)}$ are all the singular values of $\widetilde{\CA}_{(k)}$, defined by \cref{eq:RA}, except the largest one.
\end{theorem}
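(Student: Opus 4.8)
The plan is to mimic the matrix-case error analysis of the previous theorem, but to combine the contributions of the $d$ modes in an $\ell_2$ (rather than $\ell_1$) fashion, following the projection-based argument of \cite[Section 5]{VaVaMe12}. Writing $P_k:=U^{(k)}_{r_k}U^{(k)\top}_{r_k}$ for the mode-$k$ projection, the approximation is $\widehat{\CA}:=\CA\ltimes_1 P_1\ltimes_2\cdots\ltimes_d P_d$, so that $\CE_1=\CA-\widehat{\CA}$. First I would introduce the partially projected tensors $\CA^{(0)}:=\CA$ and $\CA^{(k)}:=\CA^{(k-1)}\ltimes_k P_k$, and use Proposition~\ref{pro:I} together with linearity of $\ltimes_k$ in its matrix argument to produce the telescoping identity $\CE_1=\sum_{k=1}^d \CD_k$ with $\CD_k:=\CA^{(k-1)}\ltimes_k(I-P_k)$.

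The decisive step is to show that the summands $\CD_1,\dots,\CD_d$ are mutually orthogonal in the Frobenius inner product, which upgrades the triangle inequality to the Pythagorean identity $\|\CE_1\|_F^2=\sum_{k=1}^d\|\CD_k\|_F^2$. For $k<l$ the tensor $\CD_l$ has already been projected by $P_k$ in mode $k$ (it carries a factor $\ltimes_k P_k$), whereas $\CD_k$ carries the complementary factor $\ltimes_k(I-P_k)$; unfolding in mode $k$ and using that $P_k\otimes I_{s_k}$ and $(I-P_k)\otimes I_{s_k}$ are complementary orthogonal projections with $\big((I-P_k)\otimes I_{s_k}\big)\big(P_k\otimes I_{s_k}\big)=O$ makes $\langle\CD_k,\CD_l\rangle=0$. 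This is exactly the orthogonality lemma of \cite[Section 5]{VaVaMe12} transported to the modal STP, and I expect it to be the main obstacle, since it is what forces the $\ell_2$ (square-root-of-sum) shape of the claimed bound; a mere triangle inequality on the telescoping sum would only give the weaker $\ell_1$ combination $\sum_k(\cdots)$. Having the Pythagorean identity, I would next replace each sequential error by a direct one via $\|\CD_k\|_F\le\|\CA\ltimes_k(I-P_k)\|_F$: because the remaining factors $P_1,\dots,P_{k-1}$ act on modes other than $k$ they commute with $\ltimes_k(I-P_k)$ by Lemma~\ref{le:stp-property}(i), and each is an orthogonal projection, hence a contraction in the Frobenius norm.

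It then remains to bound each direct mode-$k$ error. Unfolding, $(\CA\ltimes_k(I-P_k))_{(k)}=\big((I-P_k)\otimes I_{s_k}\big)\CA_{(k)}$ by Definition~\ref{def:mstp} and Proposition~\ref{pro:iii}(ii). Substituting the SVD-STP $\CA_{(k)}=\widehat{U^{(k)}}\Sigma^{(k)}\widehat{V^{(k)}}^\top+E_1^{(k)}$ from \cref{eq:AkSVDSTP} and applying the triangle inequality splits this into two pieces, exactly as in the matrix-case upper-bound theorem. The projected signal term equals $\big(((I-P_k)U^{(k)})\otimes I_{s_k}\big)\Sigma^{(k)}\widehat{V^{(k)}}^\top$; since $(I-P_k)U^{(k)}$ annihilates the first $r_k$ columns of the orthogonal $U^{(k)}$, the orthogonal invariance of $\|\cdot\|_F$ leaves precisely the discarded blocks, giving $\sqrt{\sum_{i=r_k+1}^{p_k}\|S^{(k)}_i\|_F^2}$. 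The projected error term is $\big((I-P_k)\otimes I_{s_k}\big)E_1^{(k)}$, and since $(I-P_k)\otimes I_{s_k}$ is an orthogonal projection it is bounded by $\|E_1^{(k)}\|_F=\sqrt{\sum_{i=2}^{q_k}\tilde{\sigma}^{(k)2}_i}$ via the mode-$k$ analogue of \cref{eq:e1norm}.

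Collecting, set $a_k:=\sqrt{\sum_{i=r_k+1}^{p_k}\|S^{(k)}_i\|_F^2}$ and $b_k:=\sqrt{\sum_{i=2}^{q_k}\tilde{\sigma}^{(k)2}_i}$, so that $\|\CE_1\|_F^2\le\sum_{k=1}^d(a_k+b_k)^2$. A final application of the Minkowski inequality in $\IR^d$, namely $\sqrt{\sum_k(a_k+b_k)^2}\le\sqrt{\sum_k a_k^2}+\sqrt{\sum_k b_k^2}$, regroups the two families of terms and yields exactly the stated bound. Apart from the orthogonality step flagged above, every ingredient is routine and reuses the second-order argument in each mode.
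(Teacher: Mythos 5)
Your proof is correct and takes essentially the same route the paper intends: the paper gives no detailed proof of this theorem, only the observation that the truncated HOSVD-STP is a composition of STP-based multilinear orthogonal projections together with a pointer to the error analysis in \cite[Section 5]{VaVaMe12}, which is exactly the telescoping/Pythagorean argument you carry out. The details you supply---mutual orthogonality of the telescoped terms via $\bigl((I-P_k)\otimes I_{s_k}\bigr)\bigl(P_k\otimes I_{s_k}\bigr)=O$, the per-mode bound that splits off the Kronecker-approximation error $E_1^{(k)}$ exactly as in the second-order theorem, and the final Minkowski regrouping that yields the two square-root terms---are a faithful and correct realization of what the paper leaves implicit.
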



%
%

\section{Experimental results}
\label{sec:experiments}
In this section, some numerical examples are given to illustrate our theoretical results. We use the MATLAB language to do the numerical experiments.

{\bf Example 1} \ \   Let $A\in \IR^{n\times n}$ and $s$ be a factor of $n$. In this example, the matrix $A$ is generated by MATLAB function {\tt rand$(n,n)$}. We compare our full SVD-STP (FSVD-STP) and truncated SVD-STP (TSVD-STP) with the conventional truncated SVD (TSVD, taking the $r$ largest singular values and their corresponding singular vectors) from the time and their relative errors $\|A-\tilde{A}\|_F/\|A\|_F$, where $\tilde{A}$ is the computed approximation. The comparison results are shown in Table~\ref{ta1}.

\begin{table}[h]
	\renewcommand{\arraystretch}{1.5}
	\caption{The comparison results for FSVD-STP, TSVD-STP, and TSVD }\vspace{0.2cm}
	\centering {
		\begin{tabular}[l]{@{}c|c|c|c|c|c}
			\hline
			$n$& $s,r$ & Average value &  FSVD-STP  & TSVD-STP & TSVD \\\hline   
			
			\multirow{2}*{5000} & \multirow{2}*{2,50} &  Time &  4.72s & 3.43s & 13.45s \\
			&& Relative errors  & 0.4330 & 0.4954 & 0.4905 \\\hline
			
			\multirow{2}*{10000} & \multirow{2}*{2,50} & Time &  25.59s & 17.05s & 66.75s \\
			&& Relative errors   & 0.4331 & 0.4977 & 0.4952 \\\hline
			
			\multirow{2}*{10000} & \multirow{2}*{5,50} & Time &   35.97s & 34.84s & 71.16s \\
			&& Relative errors   & 0.4899 & 0.4991 & 0.4952  \\\hline
			
			\multirow{2}*{10000} & \multirow{2}*{10,50} & Time &  16.89s & 16.43s & 59.72s \\
			&& Relative errors   & 0.4975 & 0.4996& 0.4952 \\\hline
			
			\multirow{2}*{10000} & \multirow{2}*{20,50} & Time &  22.38s & 22.02s & 64.96s \\
			&& Relative errors   & 0.4994 & 0.4999& 0.4952 \\\hline
			
			\multirow{2}*{20000} & \multirow{2}*{10,50} & Time &  59.36s & 54.81s & 319.55s \\
			&& Relative errors   & 0.4975 & 0.4998& 0.4976 \\\hline
			
			\multirow{2}*{40000} & \multirow{2}*{10,50} & Time &  232.74s & 220.60s & 1716.35s \\
			&& Relative errors   & 0.4975 & 0.4999& 0.4988 \\\hline
			
		\end{tabular}\label{ta1}}
\end{table}

From Table~\ref{ta1}, we see that as the increasing of the matrix size $n$, our algorithms FSVD-STP and TSVD-STP take much less time than the conventional truncated SVD with approximate relative errors. Besides, for the required storage, the original matrix $A$ requires $n^2$ numbers. Storing the result of the conventional SVD with rank-$r$ requires $2nr+r$ numbers. This is a big savings if $r$ is much smaller than $n$. While storing the results of FSVD-STP and TSVD-STP only require $2n^2/s^2+n/s+s^2$ and $2nr/s+r+s^2$ numbers, respectively, which could be less than the one of the conventional SVD if $s$ is a small factor of $n$ (according to our numerical experiments, $s\leq 10$ would be a better choice). 

\begin{figure}[h!]
	\begin{center}
		\includegraphics[width=0.75\textwidth]{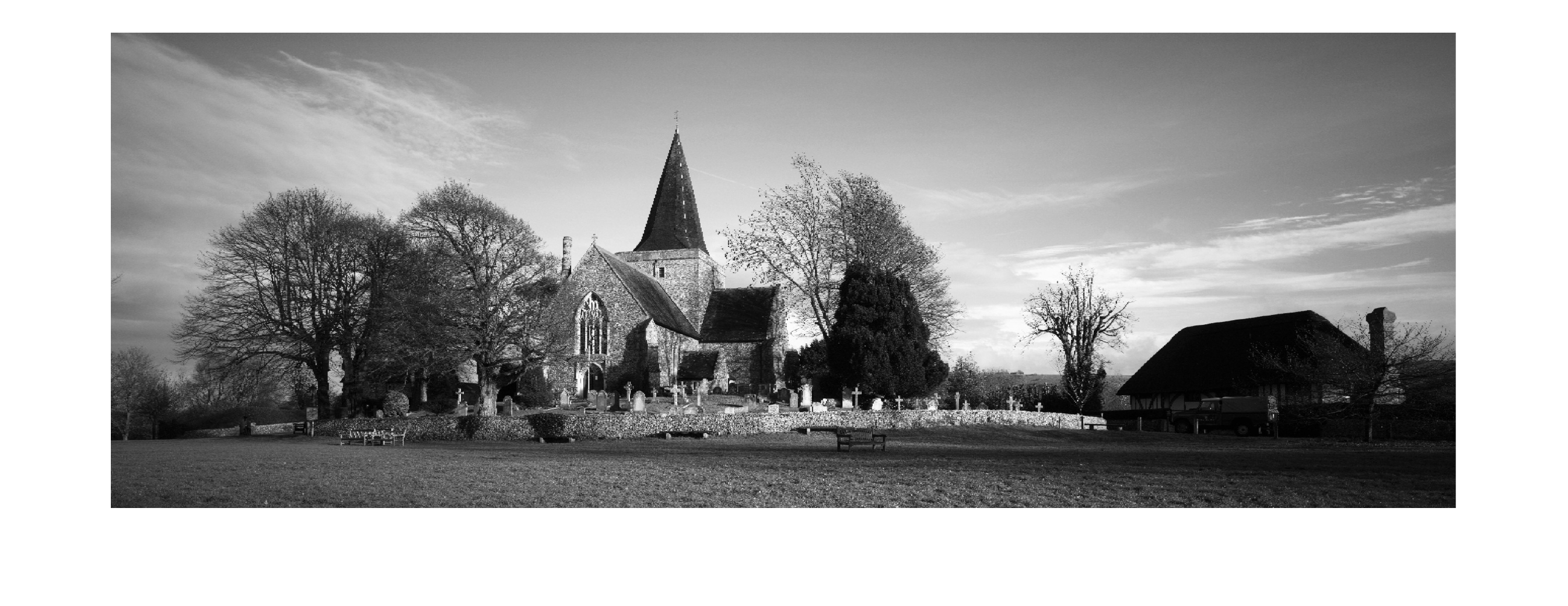}\\[-1.1cm]
		\caption*{The original image}
		\includegraphics[width=0.75\textwidth]{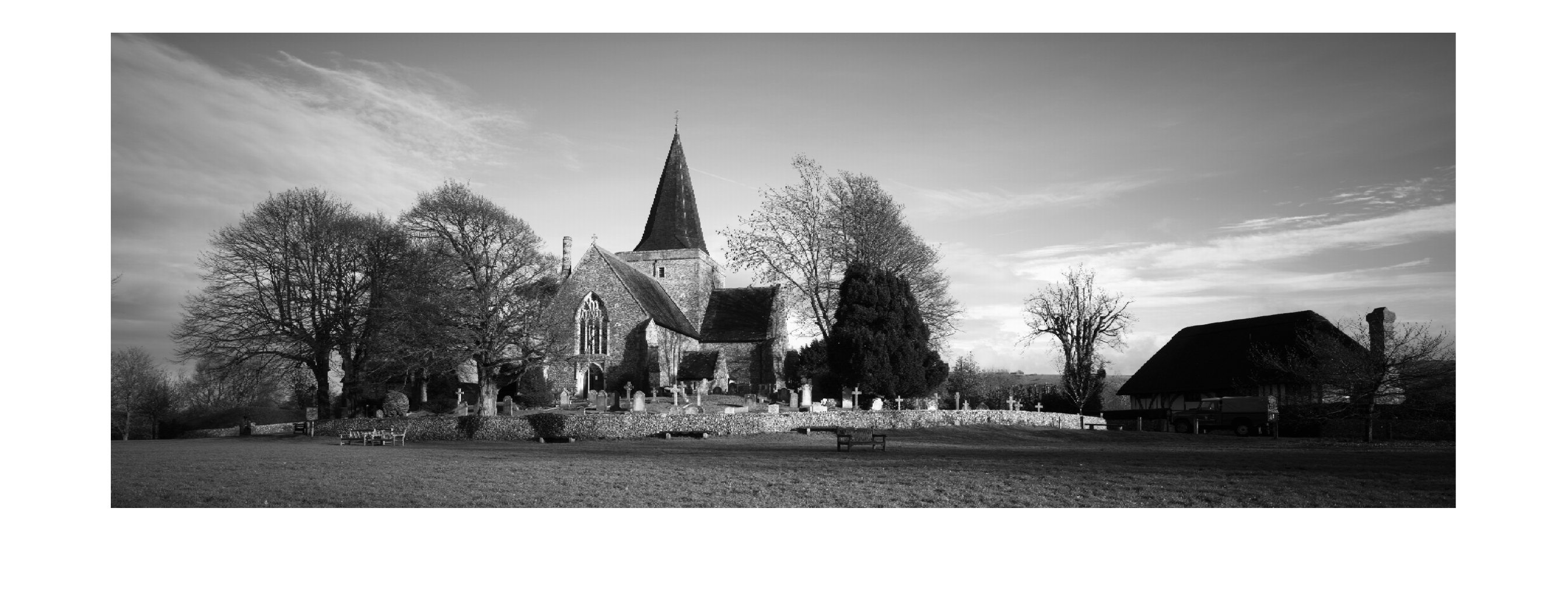}\\[-1.1cm]
		\caption*{Restoration by the FSVD-STP with $s_1=2, s_2=5$ }
		\includegraphics[width=0.75\textwidth]{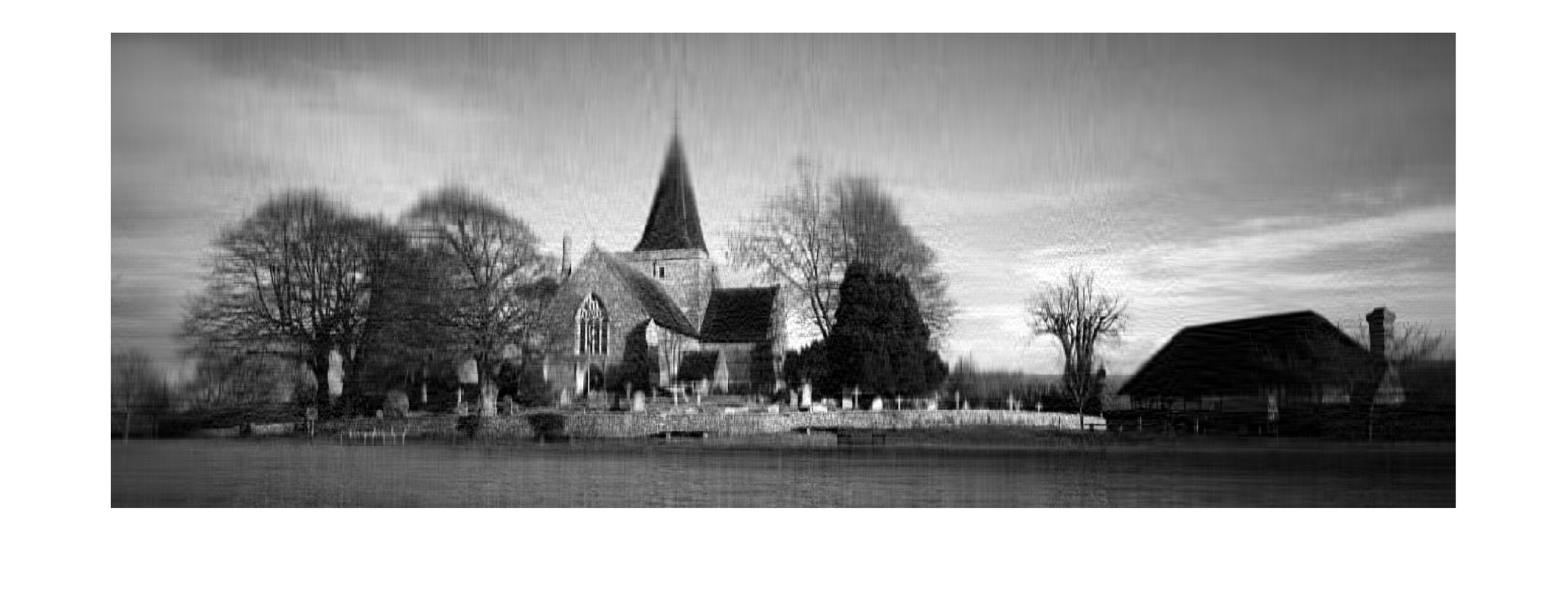}\\[-1.1cm]
		\caption*{Restoration by the TSVD-STP with $s_1=2, s_2=5$, and $r=50$ }
		\includegraphics[width=0.75\textwidth]{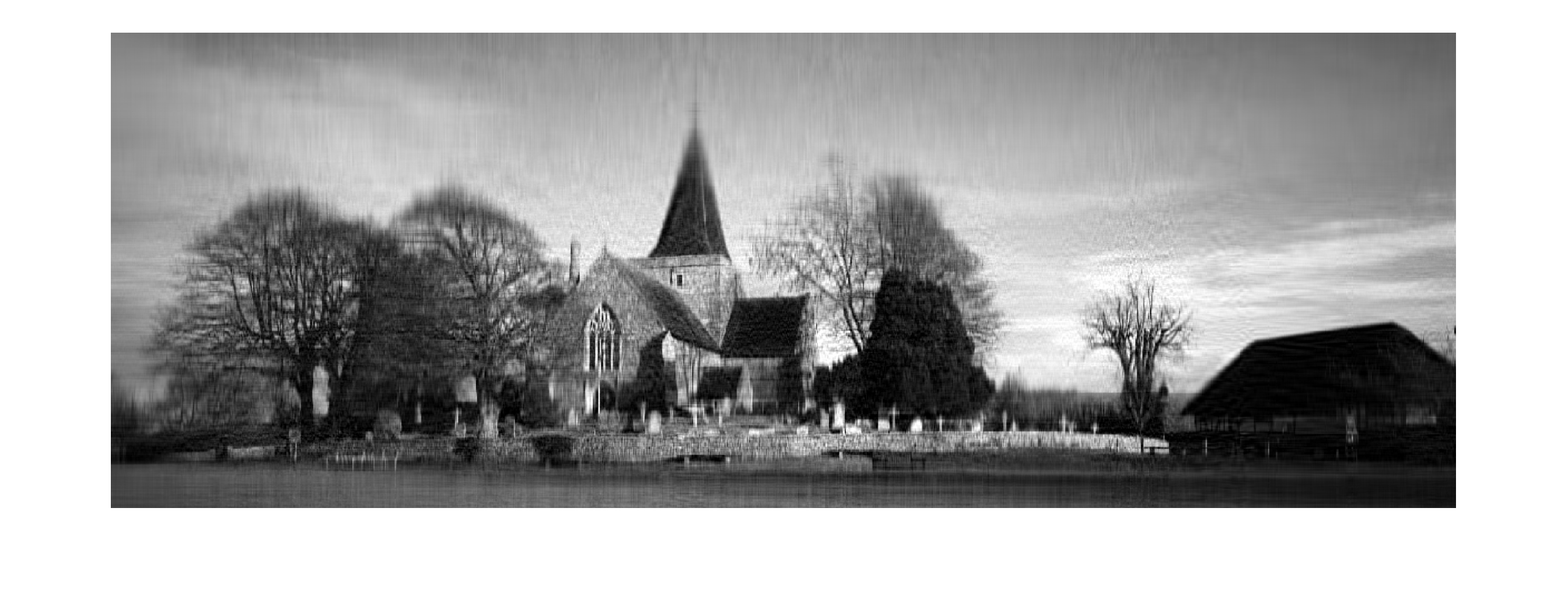}\\[-1.1cm]
		\caption*{Restoration by the TSVD with $r=50$ }
		\caption{The image restoration comparison }\label{fig:ex-pic}
	\end{center}	
\end{figure}

{\bf Example 2} \ \  In this example, an image whose corresponding matrix $A$ be of the size of $7500\times 21250$ is given. We compare the image compression results by three methods --- FSVD-STP, TSVD-STP, and TSVD, for the given image. The restoration results are shown in Figure \ref{fig:ex-pic}. The first image is the original image. The second image is the restoration of the original one by the FSVD-STP method with $s_1=2, s_2=5$. The third image is another restoration of the first image by the TSVD-STP method with $s_1=2, s_2=5$, and $r=50$. The last one is the restoration of the original image by the TSVD with $r=50$.

\begin{table}[th]
	\renewcommand{\arraystretch}{1.5}
	\caption{The comparison results in image restoration }\vspace{0.2cm}
	\centering {
		\begin{tabular}[l]{@{}c|c|c|c}
			\hline
			Average value & FSVD-STP   & TSVD-STP & TSVD \\\hline   
			
			Time &  33.2727s & 21.7582s & 45.4027s \\
			Relative errors  & 0.0971 & 0.1518 & 0.1471 \\
			SSIM  & 0.8956 & 0.7346 & 0.7400 \\
			PSNR  &  27.0951 &  23.3984 & 23.6674 \\\hline
			
		\end{tabular}\label{image-ta}}
\end{table}

{\bf Example 3} \ \   Suppose that $\CA\in \IR^{n\times n\times \cdots \times n}$ is a $d$th-order tensor and $\bs=(s, s, \dots, s)$ is a $d$-dimensional vector, where $s$ is a factor of $n$. In this example, $\CA$ is generated by MATLAB function {\tt rand$(n,n,\dots,n)$}. We compare the truncated HOSVD-STP (THOSVD-STP) with the truncated HOSVD (THOSVD with rank-$(r,r,\dots,r)$, MATLAB function ``{\tt hosvd}'' provided by \cite{BaKo06,BaKo19}) from the time and their relative errors $\|\CA-\tilde{\CA}\|_F/\|\CA\|_F$, where $\tilde{\CA}$ is the computing approximation. The comparison results are shown in Table~\ref{ta2}.

\begin{table}[htb]
	\renewcommand{\arraystretch}{1.5}
	\caption{The comparison results for FHOSVD-STP, THOSVD-STP, and THOSVD }\vspace{0.2cm}
	\centering {
		\begin{tabular}[l]{@{}c|c|c|c|c|c}
			\hline
			$n,d$& $s,r$ & Average value &  FHOSVD-STP & THOSVD-STP & THOSVD \\\hline
			\multirow{2}*{100,3} & \multirow{2}*{2,20} &  Time &  0.03s & 0.02s & 0.03s \\
			&& Relative errors & 1.1441e-12 &  0.4822 & 0.4950 \\\hline
			
			\multirow{2}*{500,3} & \multirow{2}*{2,20} & Time &  3.31s &  1.27s & 1.44s \\
			&& Relative errors  & 4.7769e-11 &  0.4999 & 0.4999 \\\hline
			
			\multirow{2}*{500,3} & \multirow{2}*{5,20} & Time & 3.68s & 1.64s & 1.30s\\
			&& Relative errors & 0.0030  &  0.4980 & 0.4999 \\\hline
			
			\multirow{2}*{100,4} & \multirow{2}*{2,20} & Time & 1.83s & 0.73s & 1.07s\\
			&& Relative errors & 1.5811e-11  &  0.4935 & 0.4995 \\\hline
			
			\multirow{2}*{200,4} & \multirow{2}*{2,20} & Time & 100.42s & 12.81s & 37.30s\\
			&& Relative errors & 1.1394e-10  &  0.4996 & 0.4999 \\\hline

			\multirow{2}*{200,4} & \multirow{2}*{5,20} & Time & 105.24s & 19.44s & 34.20s\\
			&& Relative errors &  0.0045  &  0.4841 & 0.5000 \\\hline
			
			\multirow{2}*{50,5} & \multirow{2}*{2,10} & Time & 7.40s &   2.73s & 3.42s \\
			&& Relative errors & 1.2687e-11  & 0.4974 & 0.4999 \\\hline
			
			\multirow{2}*{20,6} & \multirow{2}*{2,5} & Time & 1.85s &  0.70s & 0.79s \\
			&& Relative errors & 7.9828e-12  & 0.4961 & 0.4999 \\\hline
			
		\end{tabular}\label{ta2}}
\end{table}

From Table~\ref{ta2}, we see that the truncated HOSVD-STP could be more efficient than the conventional truncated HOSVD for some cases. As for the required storage, the original tensor $\CA$ requires $n^d$ numbers. Storing the result of the conventional truncated HOSVD with rank-$(r,r,\dots,r)$ requires $r^d+dnr$ numbers, while storing the result of THOSVD-STP with the same $r$ requires $(rs)^d+dnr/s$ numbers, which could be less than the one of the conventional truncated HOSVD if $r$ and $s$ are much small than $n$, or more precisely if 
$$
n\geq \frac{r^{d-1}s(s^d-1)}{d(s-1)}.
$$

\section{Concluding remarks}
\label{sec:con}
A Tucker-like approximation based on the STP was considered. We developed new versions of SVD and HOSVD for matrices and higher-order tensors, respectively. Numerical experiments showed that the new SVD-STP and HOSVD-STP could be better than the conventional SVD and HOSVD for large-scale dimensional problems by choosing appropriate factor parameters. It is hard to give a univeral method for choosing the factors $s_1, s_2,\dots, s_d$ in all computing problems. We have done many numerical experiments. We found that the small factors ($\leq 10$) would be the better choice for most large-scale computing problems.


%





\nocite{*}
\bibliographystyle{unsrtnat}
\bibliography{references.bib}%



\end{document}